\def\pone{\mathbbm{1}}
\def\MM{{\mathcal{M}}}
\def\PP{{\Theta}}
\newtheorem{theorem}{Theorem}[section]
\newtheorem{lemma}[theorem]{Lemma}
\newtheorem{definition}[theorem]{Definition}
\begin{document}
\title[Completeness]
{Affine Killing complete and geodesically complete homogeneous affine surfaces}
\author
{P. B. Gilkey, J. H. Park, and X. Valle-Regueiro}
\address{PBG: Mathematics Department, University of Oregon, Eugene OR 97403-1222, USA}
\email{gilkey@uoregon.edu}
\address{JHP: Department of
Mathematics, Sungkyunkwan University, Suwon, 16419 Korea.}
\email{parkj@skku.edu}
\address{XVR: Faculty of Mathematics,
University of Santiago de Compostela,
15782 Santiago de Compostela, Spain}
\email{javier.valle@usc.es}
\thanks{PBG and XVR were supported by Project MTM2016-75897-P (AEI/FEDER, UE),
JHP was supported by  Basic Science Research Program through
the National Research Foundation of Korea (NRF)
 funded by the Ministry of Education (NRF-2016R1D1A1B03930449).}
\subjclass[2010]{53C21}
\keywords{quasi-Einstein equation, geodesic completeness, Killing completeness}
\begin{abstract} An affine manifold is said to be geodesically complete if all affine geodesics
extend for all time. It is said to be affine Killing complete if the integral curves for
any affine Killing vector field extend for all time. We use the solution space of the quasi-Einstein
equation to examine these concepts in the setting of homogeneous affine surfaces.\end{abstract}
\maketitle

\section{Introduction}
Let $M$ be a connected smooth manifold of dimension $m$ which is equipped with a torsion free connection $\nabla$ on the tangent
bundle of $M$; the pair $\mathcal{M}=(M,\nabla)$ is said to be an {\it affine manifold}.
If $g$ is a pseudo-Riemannian metric on $M$, then the corresponding affine structure is obtained
by taking $\nabla$ to be the Levi-Civita connection. However, not all affine structures arise
in this fashion; such structures are said to be {\it not metrizable}. A diffeomorphism
from one affine manifold to another is said to be an {\it affine map} if
it intertwines the two connections.

Let $\Phi_t^X$ be the local 1-parameter flow of a vector field $X$ on $M$.  The following 3 conditions are equivalent and
if any is satisfied, then $X$ is said to be
an {\it affine Killing vector field} (see Kobayashi and Nomizu~\cite{KN63}):
\begin{enumerate}
\item $\Phi_t^X$ is an affine map where defined.
\item The Lie derivative of $\nabla$ with respect to $X$ vanishes.
\item $[X,\nabla_YZ]-\nabla_Y[X,Z]-\nabla_{[X,Y]}Z=0$ for all smooth vector fields $Y$ and $Z$.
\end{enumerate}
Let $\mathfrak{K}(\mathcal{M})$ be the set of affine Killing vector fields. The
Lie bracket gives $\mathfrak{K}(\mathcal{M})$ the structure of a real Lie algebra. Furthermore,
if $X\in\mathfrak{K}(\mathcal{M})$, if $X(P)=0$, and if $\nabla X(P)=0$, then $X\equiv0$. Consequently,
$\dim\{\mathfrak{K}(\mathcal{M})\}\le m+m^2$; if equality holds, then $\mathcal{M}$ is flat.
An affine Killing vector field is said to be {\it complete} if the flow $\Phi_t^X$ exists for all $t$.

Let $\operatorname{Aff}(\mathcal{M})$ be the Lie group of all affine diffeomorphisms of $\mathcal{M}$.
The Lie algebra of $\operatorname{Aff}(\mathcal{M})$ is the space of complete affine Killing vector fields.
We say that $\mathcal{M}$ is {\it affine Killing complete} if all affine Killing vector fields are complete or, equivalently,
the Lie algebra of $\operatorname{Aff}(\mathcal{M})$ is $\mathfrak{K}(\mathcal{M})$.
Consequently, determining whether or not $\mathcal{M}$ is affine Killing complete is a central geometrical question.

A smooth curve $\sigma(t)$ in $M$ is said to be a {\it geodesic} if $\nabla_{\dot\sigma}\dot\sigma=0$.
We adopt the {\it Einstein convention} and sum over repeated indices to expand
$\nabla_{\partial_{x^i}}\partial_{x^j}=\Gamma_{ij}{}^k\partial_{x^k}$ in a system of local
coordinates. If $\sigma(t)=(x^1(t),\dots,x^m(t))$, then $\sigma$ is a geodesic if and only if the {\it geodesic equation} is satisfied, i.e.
\begin{equation}\label{E1.a}
\ddot\sigma^k+\Gamma_{ij}{}^k\dot\sigma^i\dot\sigma^j=0\text{ for all }k\,.
\end{equation}
$\mathcal{M}$ is said to be {\it geodesically complete} if every geodesic extends for infinite time.
Any geodesically complete affine manifold is affine Killing complete  (see Kobayashi and Nomizu~\cite{KN63}) but the converse fails as we shall
see presently.

If $\operatorname{Aff}(\mathcal{M})$ acts transitively on $M$, then $\mathcal{M}$ is said to be
{\it affine homogeneous}; there is a corresponding local theory if the diffeomorphisms in question are only assumed
to be locally defined. The classification of locally homogeneous affine surfaces by Opozda~\cite{Op04}
may be described as follows. Up to isomorphism, there
are two simply connected Lie groups of dimension $2$,
the translation group $\mathbb{R}^2$ and the $ax+b$ group $\mathbb{R}^+\times\mathbb{R}$.
A left invariant affine structure on $\mathbb{R}^2$ (resp. on $\mathbb{R}^+\times\mathbb{R}$) is said to be {\it Type~$\mathcal{A}$}
(resp. {\it Type~$\mathcal{B}$}). These geometries are globally homogeneous; $\operatorname{Aff}(\cdot)$ acts transitively
on such geometries. Every locally
homogeneous affine surface is either modeled on a Type~$\mathcal{A}$ geometry, on a Type~$\mathcal{B}$
geometry, or on the geometry of the round sphere $S^2$ in $\mathbb{R}^3$ with the Levi-Civita connection.

Any Riemannian metric on a compact manifold is complete. Thus
the sphere is geodesically complete. Similarly, any vector field on a compact manifold is complete and thus
the sphere is Killing complete.
For that reason, we will concentrate on studying the Type~$\mathcal{A}$ and Type~$\mathcal{B}$ geometries in this paper.
We emphasize that geodesic completeness (resp., affine Killing completeness) is equivalent to prolonging a system of second order
(resp., first order) non-linear ODEs. Even in the homogeneous setting these equations can be quite unmanageable. Consequently, instead
of a direct approach, we shall follow a different ansatz making use of the affine quasi-Einstein equation.
We will examine Killing completeness for both the Type~$\mathcal{A}$ and the Type~$\mathcal{B}$ geometries.
However, we will examine geodesic completeness only in the context of the Type~$\mathcal{A}$ geometries
as the quasi-Einstein equation proves not to be terribly useful in studying geodesic completeness for the Type~$\mathcal{B}$
geometries.

\subsection{The Hessian, the curvature, and the quasi-Einstein equation} Set
$$
\mathcal{H}\phi=(\partial_{x^i}\partial_{x^j}\phi-\Gamma_{ij}{}^k\partial_{x^k}\phi)dx^i\otimes dx^j\in S^2(M)\,.
$$
Define the curvature operator $R(\cdot,\cdot)$ and the Ricci tensor $\rho(\cdot,\cdot)$ by setting:
$$
R(u,v):=\nabla_u\nabla_v-\nabla_v\nabla_u-\nabla_{[u,v]} \text{ and }
\rho(x,y):=\operatorname{Tr}\{z\rightarrow R(z,x)y\}\,.
$$
As the Ricci tensor need not be symmetric,
we introduce the symmetrization $\rho_s(x,y):=\frac12\{\rho(x,y)+\rho(y,x)\}$. Let
$\mathcal{Q}(\mathcal{M})$ be the solution space of the {\it quasi-Einstein equation}:
$$
\mathcal{Q}(\mathcal{M}):=\{\phi\in C^\infty(M):\mathcal{H}\phi+\phi\rho_s=0\}\,.
$$

\subsection{Strong projective equivalence}
We say that two affine connections $\nabla$ and $\tilde\nabla$ are {\it strongly projectively equivalent}
if there exists a smooth function $\varphi$
so that $\tilde\nabla_XY=\nabla_XY+Y(\varphi)X+X(\varphi)Y$. In this setting,
we shall say that $\varphi$ provides a {\it strong projective equivalence} from $\mathcal{M}=(M,\nabla)$ to
${}^\varphi\mathcal{M}:=(M,\tilde\nabla)$. We say that $\mathcal{M}$ is {\it strongly
projectively flat} if $\mathcal{M}$ is strongly projectively equivalent to a flat connection.

We will prove the following result in Section~\ref{S2}.

\begin{lemma}\label{L1.1}Let $\mathcal{M}=(\mathbb{R}^2,\nabla)$ be a Type~$\mathcal{A}$ geometry.
There exists a linear function $\varphi(x^1,x^2)=a_1x^1+a_2x^2$ which provides
a strong projective equivalence from $\mathcal{M}$ to a flat Type~$\mathcal{A}$ geometry
and which satisfies $e^{-\varphi}\in\mathcal{Q}(\mathcal{M})$.\end{lemma}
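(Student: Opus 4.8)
The plan is to reduce both assertions to a single algebraic system in the coefficients $a_1,a_2$ and then to solve that system. Since $\mathcal{M}$ is Type~$\mathcal{A}$, the Christoffel symbols $\Gamma_{ij}{}^k$ are constant; a short computation using torsion-freeness then shows that the Ricci tensor is constant and symmetric, so $\rho_s=\rho$ with $\rho_{ij}=\Gamma_{ij}{}^m\Gamma_{km}{}^k-\Gamma_{kj}{}^m\Gamma_{im}{}^k$. For a linear function $\varphi=a_1x^1+a_2x^2$ one has $\partial_{x^i}e^{-\varphi}=-a_ie^{-\varphi}$ and $\partial_{x^i}\partial_{x^j}e^{-\varphi}=a_ia_je^{-\varphi}$, so that $(\mathcal{H}e^{-\varphi})_{ij}=e^{-\varphi}(a_ia_j+\Gamma_{ij}{}^ka_k)$. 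Consequently $e^{-\varphi}\in\mathcal{Q}(\mathcal{M})$ if and only if the algebraic system
\[
a_ia_j+\Gamma_{ij}{}^ka_k+\rho_{ij}=0,\qquad i,j\in\{1,2\},
\]
holds; call this system $(\mathrm{E})$.

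Next I would identify $(\mathrm{E})$ with the flatness of ${}^\varphi\mathcal{M}$. Because $\varphi$ is linear, $\partial_{x^i}\varphi=a_i$ is constant and the strongly projectively equivalent connection has constant Christoffel symbols $\tilde\Gamma_{ij}{}^k=\Gamma_{ij}{}^k+a_i\delta_j^k+a_j\delta_i^k$; in particular ${}^\varphi\mathcal{M}$ is again Type~$\mathcal{A}$. Expanding the curvature $\tilde R_{ijl}{}^k=\tilde\Gamma_{jl}{}^m\tilde\Gamma_{im}{}^k-\tilde\Gamma_{il}{}^m\tilde\Gamma_{jm}{}^k$ and contracting over $i=k$ gives $\tilde\rho_{ij}=\rho_{ij}+a_ia_j+\Gamma_{ij}{}^ka_k$, so $(\mathrm{E})$ is precisely the condition $\tilde\rho=0$. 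Since in dimension two the full curvature of a torsion-free connection is recovered from its Ricci tensor, $\tilde\rho=0$ forces $\tilde R=0$, and thus $(\mathrm{E})$ holds if and only if ${}^\varphi\mathcal{M}$ is flat. This is the step that welds the two conclusions of the lemma together: one and the same $\varphi$ realizes the strong projective equivalence to a flat Type~$\mathcal{A}$ geometry and places $e^{-\varphi}$ in $\mathcal{Q}(\mathcal{M})$.

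The remaining, and genuinely substantive, point is that $(\mathrm{E})$ --- three equations in the two unknowns $a_1,a_2$ --- always admits a real solution; here the structural relation between $\rho$ and $\Gamma$ must be used, as a generic overdetermined system would have none. I would rephrase flatness of the constant connection $\tilde\Gamma$ as the commutation $[\tilde A_1,\tilde A_2]=0$ of the matrices $(\tilde A_i)^k{}_l=\tilde\Gamma_{il}{}^k$, and then arrange one of these matrices to be scalar. The key observation is that the binary cubic $v\mapsto\Gamma_{ij}{}^1v^iv^j\,v^2-\Gamma_{ij}{}^2v^iv^j\,v^1$ has a nonzero real zero (a real binary cubic always has a real root, and if it vanishes identically every direction qualifies), so there is a nonzero vector $v$ with $\Gamma(v,v)$ proportional to $v$; choosing linear coordinates with $\partial_{x^1}=v$ normalizes $\Gamma_{11}{}^2=0$. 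With this normalization the explicit choice $a_1=\Gamma_{12}{}^2-\Gamma_{11}{}^1$, $a_2=-\Gamma_{12}{}^1$ makes $\tilde A_1$ a multiple of the identity, whence $[\tilde A_1,\tilde A_2]=0$, so $\tilde R=0$ and, by the equivalence above, $e^{-\varphi}\in\mathcal{Q}(\mathcal{M})$; transporting back by the (linear) coordinate change yields the required linear $\varphi$ on the original geometry. I expect the reduction to $(\mathrm{E})$ to be routine and the existence of a real solution to be the main obstacle, resolved by the real-root property of the binary cubic together with this scalar normalization.
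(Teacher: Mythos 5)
Your proof is correct, and it takes a genuinely different route from the paper's. I checked the two computations everything rests on: the perturbation identity $\tilde\rho_{ij}=\rho_{ij}+a_ia_j+\Gamma_{ij}{}^ka_k$ for the deformed constant connection $\tilde\Gamma_{ij}{}^k=\Gamma_{ij}{}^k+a_i\delta_j^k+a_j\delta_i^k$ (valid in dimension two; the coefficients are dimension-dependent), and the fact that when $\Gamma_{11}{}^2=0$ the choice $a_1=\Gamma_{12}{}^2-\Gamma_{11}{}^1$, $a_2=-\Gamma_{12}{}^1$ makes $\tilde A_1=(2\Gamma_{12}{}^2-\Gamma_{11}{}^1)$ times the identity, so $[\tilde A_1,\tilde A_2]=0$ and the deformed connection is flat. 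Both proofs share the same skeleton --- find a linear $\varphi$ whose strong projective deformation is Ricci-flat, the difficulty being that $\tilde\rho=0$ is three equations in the two unknowns $(a_1,a_2)$ --- and both ultimately invoke the fact that a real cubic has a real root; the difference is where that input is placed. The paper argues by cases: if $\Gamma_{11}{}^2\ne0$ it normalizes $\Gamma_{11}{}^2=1$, solves $\tilde\rho_{11}=0$ for $a_2$, and then relies on the observed factorization $\tilde\rho_{22}=(a_1-\Gamma_{11}{}^1+\Gamma_{12}{}^2)\,\tilde\rho_{12}$ together with $\tilde\rho_{12}$ being a monic cubic in $a_1$; if $\Gamma_{11}{}^2=0$ it either interchanges $x^1$ and $x^2$ or exhibits an explicit solution. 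You instead spend the cubic-root argument up front, on the binary cubic whose vanishing says $\Gamma(v,v)$ is proportional to $v$; this normalizes $\Gamma_{11}{}^2=0$ once and for all, after which a single closed-form choice of $(a_1,a_2)$ handles every case via the commutator criterion for flatness of a constant connection. This eliminates both the case split and the divisibility ``miracle'' the paper needs, at the cost of a preliminary linear change of coordinates (which, as you note, is harmless since linear maps preserve the Type~$\mathcal{A}$ class, linearity of $\varphi$, and $\mathcal{Q}$). A second difference: the paper deduces the quasi-Einstein membership from Theorem~\ref{T1.2}~(4) ($\mathcal{Q}({}^{\varphi}\mathcal{M})=e^{\varphi}\mathcal{Q}(\mathcal{M})$, with the constant function solving the equation on a flat surface), whereas you verify it directly by computing $\mathcal{H}e^{-\varphi}$ and identifying the resulting system with $\tilde\rho=0$; this welds the two conclusions of the lemma through one identity and makes the proof self-contained modulo the symmetry of $\rho$ (Theorem~\ref{T1.4}~(1)). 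Your version buys uniformity, an explicit formula, and independence from Theorem~\ref{T1.2}; the paper's buys brevity given that Theorem~\ref{T1.2} is already in hand.
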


There is a close relationship between strong projective equivalence and the solutions of the quasi-Einstein equation.
We refer to Brozos-V\'{a}zquez et al.~\cite{BGGV16} and to Gilkey and Valle-Regueiro~\cite{GV18} for the proof of the
following result.
\begin{theorem}\label{T1.2}
Let $\mathcal{M}=(M,\nabla)$ be a simply connected affine surface.
\begin{enumerate}
\item If $\phi\in\mathcal{Q}(\mathcal{M})$ with $\phi(0)=0$ and $d\phi(0)=0$, then $\phi\equiv0$.
\item $\dim\{\mathcal{Q}(\mathcal{M})\}\le 3$.
\item $\dim\{\mathcal{Q}(\mathcal{M})\}=3$ if and only if $\mathcal{M}$ is strongly projectively flat.
\item $\mathcal{Q}({}^{\varphi}\mathcal{M})=e^{\varphi}\mathcal{Q}(\mathcal{M})$.
\item Let $(M,\nabla)$ and $(\tilde M,\tilde\nabla)$ be two strongly projectively flat affine surfaces and
let $\Phi$ be a diffeomorphism
from $M$ to $\tilde M$. If $\Phi^*\mathcal{Q}(\tilde M,\tilde\nabla)=\mathcal{Q}(M,\nabla)$,
then $\Phi^*\tilde\nabla=\nabla$.
\end{enumerate}\end{theorem}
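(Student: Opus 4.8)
The plan is to prolong the quasi-Einstein equation to a first-order linear system and to read off all five assertions from the resulting connection. In coordinates the equation $\mathcal{H}\phi+\phi\rho_s=0$ reads $\partial_i\partial_j\phi=\Gamma_{ij}{}^k\partial_k\phi-\phi(\rho_s)_{ij}$, so every second derivative of $\phi$ is expressed through the $1$-jet $Z:=(\phi,\partial_1\phi,\partial_2\phi)$. Differentiating yields $dZ=\Omega\,Z$ for an explicit matrix $\Omega$ of $1$-forms assembled from $\Gamma$ and $\rho_s$, so that $\mathcal{Q}(\mathcal{M})$ is exactly the space of flat sections of the connection $D:=d-\Omega$ on the trivial rank-$3$ bundle over $M$. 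Along any curve this is a linear ODE, so $Z$ is determined by its value at one point; taking $Z(0)=0$ gives (1), and since $\phi\mapsto Z(0)$ is then an injection into $\mathbb{R}^3$ we obtain (2). By uniqueness a nonzero solution has $Z$ nowhere vanishing, so linearly independent solutions stay pointwise independent; hence $\dim\mathcal{Q}(\mathcal{M})=3$ precisely when $D$ carries a global flat frame, which on the simply connected $M$ is equivalent to vanishing of the curvature $F_D$.

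For (4) I would substitute directly. Writing $\tilde\Gamma_{ij}{}^k=\Gamma_{ij}{}^k+\delta_i^k\partial_j\varphi+\delta_j^k\partial_i\varphi$ and $\psi=e^\varphi\phi$, the first-order cross terms cancel and one computes $e^{-\varphi}\tilde{\mathcal{H}}\psi=\mathcal{H}\phi+\phi(\mathcal{H}\varphi-d\varphi\otimes d\varphi)$. It then remains to verify the transformation law $\tilde\rho_s=\rho_s-\mathcal{H}\varphi+d\varphi\otimes d\varphi$, a routine computation of $\tilde\rho$ from $\tilde\Gamma$; granting it, $e^{-\varphi}(\tilde{\mathcal{H}}\psi+\psi\tilde\rho_s)=\mathcal{H}\phi+\phi\rho_s$, which is exactly (4).

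For (3), the flat model $\nabla^0$ (with $\Gamma=0$ and $\rho_s=0$) has $\mathcal{Q}=\{c_0+c_1x^1+c_2x^2\}$ of dimension $3$; since (4) preserves $\dim\mathcal{Q}$, any strongly projectively flat $\mathcal{M}$ also has $\dim\mathcal{Q}(\mathcal{M})=3$. The converse is the main obstacle. Here I would compute $F_D$ and sort its components into an algebraic relation between the curvature and $\rho_s$ together with a first-order, Cotton-type relation on $\rho_s$; in dimension $2$ these should be precisely the integrability conditions making the flatness equations for the projectively changed connection ${}^\varphi\nabla$ solvable. The delicate point is to pass from $F_D=0$ to a genuine potential $\varphi$ rather than merely a $1$-form: one must show the flattening projective factor can be chosen exact, and this is where both the symmetric-Ricci coupling in the equation and the simple connectivity of $M$ enter, the latter integrating the resulting closed $1$-form to the desired $\varphi$.

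Finally, (5) is a reconstruction argument. When $\dim\mathcal{Q}(\mathcal{M})=3$, the evaluation $\phi\mapsto(\phi(p),\partial_1\phi(p),\partial_2\phi(p))$ is an isomorphism $\mathcal{Q}(\mathcal{M})\to\mathbb{R}^3$ at each $p$, so a basis $\{\phi^{(a)}\}$ has pointwise independent $1$-jets. For each fixed $(i,j)$ the relations $\partial_i\partial_j\phi^{(a)}=\Gamma_{ij}{}^k\partial_k\phi^{(a)}-\phi^{(a)}(\rho_s)_{ij}$ form an invertible $3\times3$ system for the unknowns $(\Gamma_{ij}{}^1,\Gamma_{ij}{}^2,(\rho_s)_{ij})$, so $\mathcal{Q}(\mathcal{M})$ determines $\rho_s$ and, $\nabla$ being torsion free, all of $\Gamma$; thus $\nabla$ is recovered from $\mathcal{Q}(\mathcal{M})$ in a diffeomorphism-equivariant way. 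Consequently $\Phi^*\mathcal{Q}(\tilde M,\tilde\nabla)=\mathcal{Q}(M,\nabla)$ forces $\Phi^*\tilde\nabla=\nabla$.
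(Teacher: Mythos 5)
The paper itself contains no proof of Theorem~\ref{T1.2}: it defers entirely to \cite{BGGV16} and \cite{GV18}, so your proposal can only be judged against those sources and on its own correctness. Your prolongation of the equation to a flat-section problem $dZ=\Omega Z$ for the $1$-jet $Z=(\phi,\partial_1\phi,\partial_2\phi)$, i.e.\ viewing $\mathcal{Q}(\mathcal{M})$ as the parallel sections of $D=d-\Omega$ on a rank-$3$ bundle, is precisely the mechanism of those references, and it settles (1) and (2) completely. Your treatment of (4) is correct: one checks directly that $e^{-\varphi}\tilde{\mathcal{H}}(e^{\varphi}\phi)=\mathcal{H}\phi+\phi(\mathcal{H}\varphi-d\varphi\otimes d\varphi)$, and the transformation law $\tilde\rho_s=\rho_s-\mathcal{H}\varphi+d\varphi\otimes d\varphi$ does hold in dimension two (the skew correction to $\rho$ involves $d\omega$ with $\omega=d\varphi$, hence vanishes), so the two correction terms cancel exactly as you claim. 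Your reconstruction argument for (5) is also complete and clean: pointwise bijectivity of the jet evaluation makes the $3\times3$ systems invertible, so the solution space determines $(\Gamma_{ij}{}^k,(\rho_s)_{ij})$, and two torsion-free connections on $M$ with the same $3$-dimensional solution space must coincide.

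The genuine gap is the converse half of (3), and it is not merely an omitted computation. Your plan --- compute $F_D$, recognize its vanishing as ``the integrability conditions'', then show the flattening projective factor can be chosen exact and integrate a closed $1$-form using simple connectivity --- is never executed, and as a strategy it cannot succeed at the stated level of generality. On the round sphere $S^2$, which is simply connected, the restrictions of the three linear functions of $\mathbb{R}^3$ satisfy $\mathcal{H}\phi+\phi g=0$ (here $\rho_s=g$), so $\dim\{\mathcal{Q}\}=3$ and $F_D=0$; yet $S^2$ carries no flat torsion-free connection whatsoever (a closed surface admits one only if its Euler characteristic vanishes), so there is no global $1$-form, closed or otherwise, whose projective change flattens $\nabla$. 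What $F_D=0$ actually yields, via your own part (4), is \emph{local} strong projective flatness: near any point choose $\phi\in\mathcal{Q}$ with $\phi>0$ (surjectivity of jet evaluation), set $\varphi=\log\phi$; then $\pone\in\mathcal{Q}({}^{-\varphi}\mathcal{M})$ forces the symmetrized Ricci tensor of ${}^{-\varphi}\nabla$ to vanish, and the two remaining solutions have pointwise independent differentials and furnish local affine coordinates. Promoting this local statement to the global assertion (3) --- equivalently, pinning down the sense in which ``strongly projectively flat'' is to be read --- is exactly the content of the cited references, and your sketch does not engage it: the sphere shows the difficulty is not where you located it (integrating a closed form on a simply connected space), but in the existence of a global projective factor at all. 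A smaller, repairable slip occurs in your forward direction: a flat connection on a simply connected surface need not be globally isomorphic to $(\mathbb{R}^2,\Gamma=0)$ (e.g.\ $\MM_1^6$ is flat but develops onto a proper open subset of the affine plane), so you should argue instead that $F_D=0$ is a local condition, checked in local affine coordinates, and then invoke simple connectivity to get $\dim\{\mathcal{Q}\}=3$.
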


By Theorem~\ref{T1.2}, $\mathcal{Q}$ transforms conformally under strong projective deformations. Since the unparameterized geodesic
structure is not altered by projective deformations, $\mathcal{Q}$ is intimately related with the affine geodesic structure in this instance.

\subsection{Type~$\mathcal{A}$ geometries} The Christoffel symbols of a Type $\mathcal{A}$ structure on $\mathbb{R}^2$
take the form $\Gamma=\Gamma(a,b,c,d,e,f)$ for $(a,b,c,d,e,f)\in\mathbb{R}^6$ where
$$\Gamma(a,b,c,d,e,f):=\left\{\begin{array}{lll}\Gamma_{11}{}^1=a,&\Gamma_{11}{}^2=b,&
\Gamma_{12}{}^1=\Gamma_{21}{}^1=c\\
\Gamma_{12}{}^2=\Gamma_{21}{}^2=d,&\Gamma_{22}{}^1=e,&\Gamma_{22}{}^2=f
\end{array}\right\}.$$
Let $\mathcal{M}(a,b,c,d,e,f)$ be the corresponding affine structure on $\mathbb{R}^2$.
\begin{definition}\label{D1.3}
\rm We define the following Type~$\mathcal{A}$
affine structures $\MM_i^j(\cdot)$ on $\mathbb{R}^2$; a direct computation then
establishes $\mathcal{Q}$:
\medbreak
$\MM_0^6:=\mathcal{M}(0,0,0,0,0,0)$, \qquad$\mathcal{Q}(\MM_0^6)=\operatorname{Span}\{\pone,x^1,x^2\}$.
\par$\MM_1^6:=\mathcal{M}(1,0,0,1,0,0)$, \qquad$\mathcal{Q}(\MM_1^6)=\operatorname{Span}\{\pone,e^{x^1},x^2e^{x^1}\}$.
\par$\MM_2^6:=\mathcal{M}(-1,0,0,0,0,1)$, \phantom{.....}$\mathcal{Q}(\MM_2^6)=\operatorname{Span}\{\pone,e^{x^2},e^{-x^1}\}$.
\par$\MM_3^6:=\mathcal{M}(0,0,0,0,0,1)$, \phantom{........}$\mathcal{Q}(\MM_3^6)=\operatorname{Span}\{\pone,x^1,e^{x^2}\}$.
\par$\MM_4^6:=\mathcal{M}(0,0,0,0,1,0)$,\phantom{........} $\mathcal{Q}(\MM_4^6)=\operatorname{Span}\{\pone,x^2,(x^2)^2+2x^1\}$.
\par$\MM_5^6:=\mathcal{M}(1,0,0,1,-1,0)$,\phantom{.....}
$\mathcal{Q}(\MM_5^6)=\operatorname{Span}\{\pone,e^{x^1}\cos(x^2),e^{x^1}\sin(x^2)\}$.
\par$\MM_1^4:=\mathcal{M}(-1,0,1,0,0,2)$,\phantom{.....}
$\mathcal{Q}(\MM_1^4)=\operatorname{Span}\{e^{x^2},x^2e^{x^2},e^{-x^1+x^2}\}$.
\par$\MM_2^4(c):=\mathcal{M}(-1,0,c,0,0,1+2c)\text{ for }c\notin\{0,-1\}$,
\par\qquad$\mathcal{Q}(\MM_2^4(c))=\operatorname{Span}\{e^{cx^2},e^{(1+c)x^2},e^{cx^2-x^1}\}$.
\par$\MM_3^4(c):=\mathcal{M}(0,0,c,0,0,1+2c)\text{ for }c\notin\{0,-1\}$, \par\qquad
$\mathcal{Q}(\MM_3^4(c))=\operatorname{Span}\{e^{cx^2},e^{(1+c)x^2},x^1e^{cx^2}\}$.
\par$\MM_4^4(c):=\mathcal{M}(0,0,1,0,c,2)$, $\mathcal{Q}(\MM_4^4(c))=\operatorname{Span}\{e^{x^2},x^2e^{x^2},
(\textstyle\frac12c(x^2)^2+x^1)e^{x^2}\}$.
\par$\MM_5^4(c):=\mathcal{M}(1,0,0,0,1+c^2,2c)$,
\par\qquad$\mathcal{Q}(\MM_5^4(c))=\operatorname{Span}\{e^{cx^2}\cos(x^2),e^{cx^2}\sin(x^2),
e^{x^1}\}$.
\par$\MM_1^2(a_1,a_2):=\mathcal{M}\left(\frac
{a_1^2+a_2-1,a_1^2-a_1,a_1a_2,a_1a_2,a_2^2-a_2,a_1+a_2^2-1}{a_1+a_2-1}\right)$ for $a_1a_2\ne0$
\par\qquad and $a_1+a_2\ne1$,
$\mathcal{Q}(\MM_1^2(a_1,a_2))=\operatorname{Span}\{e^{x^1},e^{x^2},e^{a_1x^1+a_2x^2}\}$.
\par $\MM_2^2(b_1,b_2):=\mathcal{M}\left(1+b_1,0,b_2,1,\frac{1+b_2^2}{b_1-1},0\right)$ for $b_1\ne1$,
\par\qquad
$\mathcal{Q}(\MM_2^2(b_1,b_2))=\operatorname{Span}\{e^{x^1}\cos(x^2),e^{x^1}\sin(x^2),e^{b_1x^1+b_2x^2}\}$.
\par$\MM_3^2(c):=\mathcal{M}(2,0,0,1,c,1)$ for $c\ne0$,\par\qquad\qquad
$\mathcal{Q}(\MM_3^2(c))=\operatorname{Span}\{e^{x^1},(x^1-cx^2)e^{x^1},e^{x^1+x^2}\}$.
\par$\MM_4^2(\pm1):=\mathcal{M}(2,0,0,1,\pm1,0)$,\par\qquad\qquad
$\mathcal{Q}(\MM_4^2(\pm1))=\operatorname{Span}\{e^{x^1},x^2e^{x^1},(2x^1\pm (x^2)^2)e^{x^1}\}$.
\end{definition}

\subsection{Linear equivalence and parametrization}
We say that two Type~$\mathcal{A}$ geometries $(\mathbb{R}^2,\nabla_1)$ and $(\mathbb{R}^2,\nabla_2)$ are
{\it linearly equivalent} if some element of $\operatorname{GL}(2,\mathbb{R})$ intertwines these two geometries.
The parametrization of the
Type~$\mathcal{A}$ geometries given below in Theorem~\ref{T1.4} was established by Gilkey and Valle-Regueiro \cite{GV18};
we also refer to a slightly different
parametrization given in Brozos-V\'{a}zquez, Garc\'{i}a-R\'{i}o, and  Gilkey~\cite{BGG16}.

\begin{theorem}\label{T1.4}
Let $\mathcal{M}$ be a Type~$\mathcal{A}$ structure.
\begin{enumerate}
\item The Ricci tensor of $\mathcal{M}$ is symmetric.
\item The following assertions are equivalent.
\begin{enumerate}
\item $\operatorname{Rank}\{\rho\}=2$.
\item $\dim\{\mathfrak{K}(\mathcal{M})\}=2$.
\item $\mathcal{M}$ is linearly equivalent to $\MM_i^2(\cdot)$ for some $i$.
\end{enumerate}
\item The following assertions are equivalent.
\begin{enumerate}
\item $\operatorname{Rank}\{\rho\}=1$.
\item $\dim\{\mathfrak{K}(\mathcal{M})\}=4$.
\item $\mathcal{M}$ is linearly equivalent to $\MM_i^4(\cdot)$ for some $i$.
\end{enumerate}
\item The following assertions are equivalent.
\begin{enumerate}
\item $\operatorname{Rank}\{\rho\}=0$.
\item $\dim\{\mathfrak{K}(\mathcal{M})\}=6$.
\item $\mathcal{M}$ is linearly equivalent to $\MM_i^6(\cdot)$ for some $i$.
\item $\mathcal{M}$ is flat.
\end{enumerate}\end{enumerate}
\end{theorem}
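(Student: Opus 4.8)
The plan is to treat the symmetric Ricci tensor $\rho$ as the organizing invariant and to establish the clean relation $\dim\{\mathfrak{K}(\mathcal{M})\}=6-2\operatorname{Rank}\{\rho\}$, from which all the listed equivalences follow once the normal forms are identified.

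I would begin with assertion~(1). Because the Christoffel symbols of a Type~$\mathcal{A}$ structure are constant, the curvature reduces to its purely quadratic part, and tracing gives $\rho_{ij}=\Gamma_{ij}{}^m\Gamma_{pm}{}^p-\Gamma_{pj}{}^m\Gamma_{im}{}^p$. Each of these two terms is symmetric under $i\leftrightarrow j$: the first manifestly, and the second after relabelling the dummy indices and invoking the torsion-free symmetry $\Gamma_{ab}{}^c=\Gamma_{ba}{}^c$. Equivalently, the antisymmetric part of the Ricci tensor of any affine connection equals, up to sign, $\partial_i\Gamma_{jp}{}^p-\partial_j\Gamma_{ip}{}^p$, the curvature of the induced connection on the density bundle, which vanishes for constant $\Gamma$. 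This is the only step where homogeneity enters decisively, and it is short.

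Next I would record that both $\operatorname{Rank}\{\rho\}\in\{0,1,2\}$ and $\dim\{\mathfrak{K}\}$ are linear invariants. A linear change of frame $A\in\operatorname{GL}(2,\mathbb{R})$ preserves the Type~$\mathcal{A}$ form of a connection, since the inhomogeneous term in the transformation law of $\Gamma$ involves second derivatives of the coordinate change and so vanishes when $A$ is linear; hence $\Gamma$ transforms as a $(2,1)$-tensor and $\rho$ as $A^{\mathsf{T}}\rho A$. Thus rank is constant on linear-equivalence classes, and the three rank values partition the Type~$\mathcal{A}$ geometries into three mutually exclusive families. It therefore suffices to (i) produce, in each rank, a complete list of representatives up to linear equivalence, and (ii) compute $\dim\{\mathfrak{K}\}$ on each representative.

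For step~(i) the key move is to use the residual $\operatorname{GL}(2,\mathbb{R})$ action to put $\rho$ into a normal form adapted to its rank --- a nondegenerate form when $\operatorname{Rank}\{\rho\}=2$, a rank-one form when $\operatorname{Rank}\{\rho\}=1$, and $\rho=0$ when $\operatorname{Rank}\{\rho\}=0$ --- and then to feed this back into the explicit quadratic expression for $\rho$. This produces a polynomial system in the six parameters $(a,b,c,d,e,f)$; solving it, while continuing to quotient by the stabilizer of the chosen normal form, should reproduce exactly the families $\MM_i^2(\cdot)$, $\MM_i^4(\cdot)$, and $\MM_i^6$ of Definition~\ref{D1.3}. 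I expect this enumeration to be the main obstacle: one must check that the solution set is exhausted by the listed families and that distinct families, or distinct parameter values within a family, are genuinely linearly inequivalent. The latter can be certified in the spirit of the paper's ansatz: every Type~$\mathcal{A}$ geometry is strongly projectively flat by Lemma~\ref{L1.1}, so $\dim\{\mathcal{Q}\}=3$ by Theorem~\ref{T1.2}(3), and by Theorem~\ref{T1.2}(5) a linear equivalence must carry one solution space $\mathcal{Q}$ onto the other; the qualitative shape of the spanning solutions recorded in Definition~\ref{D1.3} (polynomial, exponential, or trigonometric) is therefore a linear invariant that separates the families.

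Finally, for step~(ii), I would solve the affine Killing equation on each representative. Rewriting condition~(3) of the Introduction shows that the second covariant derivatives of $X$ are expressed algebraically through $X$ and the curvature, so the system is of finite type and closes on the $1$-jet $(X(P),\nabla X(P))\in\mathbb{R}^2\oplus\mathbb{R}^4$; this recovers the a priori bound $\dim\{\mathfrak{K}\}\le 6$. Differentiating once more and antisymmetrizing yields integrability conditions which, for constant $\Gamma$, are linear conditions on the $1$-jet with coefficients built from $\rho$; evaluating their rank on each normal form should give $\dim\{\mathfrak{K}\}=6-2\operatorname{Rank}\{\rho\}$, that is $2,4,6$ in the three cases. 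In particular $\operatorname{Rank}\{\rho\}=0$ gives $\dim\{\mathfrak{K}\}=6$, whence $\mathcal{M}$ is flat by the bound recalled in the Introduction, which supplies the extra equivalence in~(4). Collating the three invariants --- the rank of $\rho$, the Killing dimension, and the representative on the list --- then delivers all of the asserted equivalences in~(2)--(4).
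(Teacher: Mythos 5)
First, a point about the comparison itself: the paper does not prove Theorem~\ref{T1.4} at all --- it quotes the parametrization from \cite{GV18} (with a variant in \cite{BGG16}) --- so there is no internal proof to measure your argument against; your outline is, however, broadly in the spirit of how those references proceed (normal forms for $\rho$ under the residual $\operatorname{GL}(2,\mathbb{R})$ action, followed by a case-by-case analysis). Parts of your proposal are complete and correct as written: the symmetry of $\rho$ for constant Christoffel symbols (both the index relabelling computation and the density-bundle remark are valid), and the observation that a linear change of coordinates preserves the Type~$\mathcal{A}$ class and transforms $\rho$ as a bilinear form, so that $\operatorname{Rank}\{\rho\}$ and $\dim\{\mathfrak{K}\}$ are invariants of linear equivalence. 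This correctly reduces the theorem to your steps (i) and (ii).

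The genuine gap is that steps (i) and (ii) are precisely where the content of the theorem lives, and both are deferred with ``should.'' In step (i), solving the quadratic system for $(a,b,c,d,e,f)$ under a fixed normal form of $\rho$, while quotienting by the stabilizer of that normal form, and verifying that the solution set is exhausted by the families $\MM_i^2(\cdot)$, $\MM_i^4(\cdot)$, $\MM_i^6$ of Definition~\ref{D1.3} (including the degenerate subcases and the redundancies among parameters) is the bulk of the work in \cite{BGG16,GV18}; nothing in your outline indicates how the enumeration closes up. In step (ii), the claim $\dim\{\mathfrak{K}\}=6-2\operatorname{Rank}\{\rho\}$ cannot follow from a rank count on integrability conditions whose coefficients are ``built from $\rho$'' alone: the hyperbolic plane $\mathcal{N}_4^3$ has symmetric Ricci tensor of rank $2$ but $\dim\{\mathfrak{K}\}=3$, so the Killing dimension is not a function of the pointwise normal form of $\rho$; the specific Christoffel symbols of each Type~$\mathcal{A}$ representative must enter, i.e., the computation has to be actually carried out family by family (which in turn presupposes step (i)). The implications you do establish honestly are the easy ones: $\dim\{\mathfrak{K}\}=6$ forces flatness by the bound recalled in the Introduction, and in dimension two $\rho=0$ forces $R=0$. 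Finally, your proposed separating invariant --- the ``qualitative shape'' (polynomial, exponential, trigonometric) of the spanning solutions of $\mathcal{Q}$ --- needs to be made precise before it certifies linear inequivalence; as stated it is a heuristic, not an invariant. In sum, the proposal is a reasonable plan consistent with the cited literature, but as a proof it is incomplete at its two central steps.
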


Although $\MM_i^j(\cdot)$ is not linearly equivalent to
$\MM_u^v(\cdot)$ for $(u,v)\ne(i,j)$, it can happen that
$\MM_i^j(\cdot)$ is linearly equivalent to $\MM_i^j(\cdot)$ for different values of the parameters involved;
for example, we may interchange the coordinates $x^1\leftrightarrow x^2$ to see that
$\MM_1^2(a_1,a_2)$ is linearly equivalent to $\MM_1^2(a_2,a_1)$. Giving
a precise description of the identifications describing the relevant moduli spaces is somewhat difficult and we refer for
\cite{BGG16,GV18} for further details as it will play no role here.
The notation is chosen so that $\dim\{\mathfrak{K}(\MM_i^j(\cdot))\}=j$.

\subsection{Affine Killing completeness}
We will prove the following result in Section~\ref{S4}.

\begin{theorem}\label{T1.5}
Let $\mathcal{M}=(\mathbb{R}^2,\nabla)$ be a Type~$\mathcal{A}$ surface.
Then $\mathcal{M}$ is affine Killing complete if and only if $\mathcal{M}$
is linearly equivalent to $\MM_0^6$, $\MM_4^6$, $\MM_3^4(c)$, $\MM_4^4(c)$,
or to $\MM_i^2(\cdot)$ for some $i$.
\end{theorem}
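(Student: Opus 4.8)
The plan is to linearize the affine Killing flows by means of the developing map attached to the quasi-Einstein equation, thereby replacing the nonlinear first order flow equations on $\mathbb{R}^2$ by linear flows on $\mathbb{R}^3$ whose completeness is transparent. First I would dispose of the rank two case. By Lemma~\ref{L1.1} and Theorem~\ref{T1.2}(3), every Type~$\mathcal{A}$ geometry is strongly projectively flat, so $\dim\{\mathcal{Q}(\mathcal{M})\}=3$ throughout. Since the Christoffel symbols of a Type~$\mathcal{A}$ geometry are constant, the coordinate translations $\partial_{x^1}$ and $\partial_{x^2}$ are always affine Killing vector fields; when $\operatorname{Rank}\{\rho\}=2$ we have $\dim\{\mathfrak{K}(\mathcal{M})\}=2$ by Theorem~\ref{T1.4}, so $\mathfrak{K}(\mathcal{M})=\operatorname{Span}\{\partial_{x^1},\partial_{x^2}\}$ consists entirely of complete fields. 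This settles the $\MM_i^2(\cdot)$ families and reduces the problem to $\operatorname{Rank}\{\rho\}\in\{0,1\}$.

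Next I would set up the developing map. Fix a basis $\{f_1,f_2,f_3\}$ of $\mathcal{Q}(\mathcal{M})$ and define $\Psi\colon\mathbb{R}^2\to\mathbb{RP}^2$ by $\Psi(P)=[f_1(P):f_2(P):f_3(P)]$. Theorem~\ref{T1.2}(1), together with homogeneity, shows that the evaluation $\phi\mapsto(\phi(P),d\phi(P))$ is injective on $\mathcal{Q}(\mathcal{M})$; hence $\Psi$ is a well defined immersion, and one checks on each normal form that it is in fact an embedding onto an open set $\Omega:=\Psi(\mathbb{R}^2)\subset\mathbb{RP}^2$. Because any $\Phi\in\operatorname{Aff}(\mathcal{M})$ intertwines $\nabla$, it preserves $\rho_s$ and the Hessian operator $\mathcal{H}$ and hence satisfies $\Phi^*\mathcal{Q}(\mathcal{M})=\mathcal{Q}(\mathcal{M})$; thus $\Phi\mapsto\Phi^*|_{\mathcal{Q}}$ is a representation of $\operatorname{Aff}(\mathcal{M})$ on $\mathcal{Q}(\mathcal{M})\cong\mathbb{R}^3$, and $\Psi$ is equivariant for the induced projective action. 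Differentiating yields an injective Lie algebra homomorphism $\mathfrak{K}(\mathcal{M})\hookrightarrow\mathfrak{gl}(\mathcal{Q}(\mathcal{M}))=\mathfrak{gl}(3,\mathbb{R})$, $X\mapsto\bar X$, whose image I will compute in each case by letting a spanning set of Killing fields act as derivations on the explicit basis of $\mathcal{Q}(\mathcal{M})$ recorded in Definition~\ref{D1.3}; injectivity holds because the $df_i$ span $T^*\mathbb{R}^2$. The upshot is the flow identity $\Psi\circ\Phi_t^X=\exp(t\bar X)\circ\Psi$ wherever $\Phi_t^X$ is defined.

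This identity converts completeness into a boundary question. Since $\exp(t\bar X)$ is a linear, hence complete, flow on $\mathbb{RP}^2$, a point $P$ admits $\Phi_t^X(P)$ precisely as long as $\exp(t\bar X)\Psi(P)$ remains in $\Omega$. Thus $X$ is complete if and only if the one parameter group $\exp(t\bar X)$ preserves $\Omega$, equivalently no orbit issuing from $\Omega$ meets $\partial\Omega$ in finite time; and $\mathcal{M}$ is affine Killing complete if and only if this holds for every $\bar X\in\overline{\mathfrak{K}(\mathcal{M})}$. I would then run through the normal forms. Factoring out the common exponential in each basis, the developing image is the full affine plane $\mathbb{R}^2\subset\mathbb{RP}^2$ for $\MM_0^6$, $\MM_4^6$, and $\MM_4^4(c)$; for these I would verify that the Killing flows preserve the line at infinity and so act affinely on $\Omega$, whence they are complete. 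For $\MM_3^4(c)$ one finds $\Omega=\{u>0\}$ with the generators acting by the scaling $u\mapsto e^tu$ and by shears fixing $\{u=0\}$, which again preserve $\Omega$ and give completeness. In the remaining normal forms $\Omega$ is a proper half plane ($\MM_1^6,\MM_3^6,\MM_1^4$), a quadrant ($\MM_2^6,\MM_2^4(c)$), a punctured plane ($\MM_5^6$), or a spiral region ($\MM_5^4(c)$), and in each I would exhibit a single Killing field whose linearized flow carries an interior point additively onto $\partial\Omega$ in finite time, certifying incompleteness.

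The main obstacle is the last step, in two respects. First, one must compute the non-translational members of $\mathfrak{K}(\mathcal{M})$ for the rank zero and rank one normal forms and identify their matrices $\bar X$; although the Killing equations are linear with constant coefficients, organizing this for all families is the bulk of the labor. Second, and more delicate, is separating complete from incomplete cases sharing the same developing image: $\MM_3^4(c)$ and the trio $\MM_1^6,\MM_3^6,\MM_1^4$ all develop onto a half plane, so completeness cannot be read off from $\Omega$ alone and genuinely requires knowing whether the extra Killing flows act multiplicatively, preserving the wall $\{u=0\}$, or additively, crossing it in finite time. Verifying that \emph{every} one parameter subgroup preserves $\Omega$ in the complete cases, rather than merely the translations, is where the argument must be most careful.
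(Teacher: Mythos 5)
Your strategy is viable and, once the computations are filled in, it does prove the theorem; it is in essence the paper's own mechanism viewed infinitesimally rather than at the group level. The paper's proof (Section~\ref{S4}) also runs entirely on Theorem~\ref{T1.2}: the maps $\PP_i^j$ of Theorem~\ref{T3.1} are exactly your developing maps written in an affine chart (e.g. $\PP_1^6=(e^{x^1},x^2e^{x^1})$ is your $\Psi$ for $\MM_1^6$), incompleteness is read off from non-surjectivity of these embeddings (a translation field of the target exits the proper open image in finite time, which is your boundary criterion), and completeness of $\MM_3^4(c)$, $\MM_4^4(0)$ and $\tilde{\MM}_5^4(c)$ is proved by writing down the explicit four-parameter groups $T(a_1,b_1,c_1,d_1)$ preserving $\mathcal{Q}$ --- i.e. by integrating your matrices $\bar X$ to a global action, which certifies completeness with no boundary analysis at all. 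Your formulation buys a single uniform criterion (invariance of the developing image under the projective flows); the paper's buys a shorter completeness argument and a reduction in the number of cases via the embeddings $\MM_1^4\hookrightarrow\MM_4^4(0)$, $\MM_2^4(c)\hookrightarrow\MM_3^4(c)$, $\MM_5^4(c)\hookrightarrow\tilde{\MM}_5^4(c)$.

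Two steps in your write-up are wrong as stated and must be repaired. First, the claim that $\Psi$ is an embedding onto an open set ``in each normal form'' is false, and your own list contradicts it: for $\MM_5^6$ the image is the punctured plane, and for $\MM_5^4(c)$ it is again a punctured plane (your ``spiral region''); neither is simply connected, so $\mathbb{R}^2$ cannot embed onto them. In these cases $\Psi$ is a nontrivial covering --- it identifies $(x^1,x^2)$ with $(x^1,x^2+2\pi)$ for $\MM_5^6$ and with $(x^1+2\pi c,x^2+2\pi)$ for $\MM_5^4(c)$. Your criterion survives, but the implication ``orbit stays in $\Omega$ on $[0,T]$ $\Rightarrow$ $\Phi_t^X(P)$ exists on $[0,T]$'' must then be argued by the path-lifting property of coverings rather than by applying $\Psi^{-1}$. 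Second, you cannot obtain the homomorphism $\mathfrak{K}(\mathcal{M})\to\mathfrak{gl}(\mathcal{Q}(\mathcal{M}))$ by differentiating the representation of $\operatorname{Aff}(\mathcal{M})$: the Lie algebra of $\operatorname{Aff}(\mathcal{M})$ is by definition the algebra of \emph{complete} Killing fields, so that construction presupposes exactly the completeness you are trying to decide and never sees the incomplete fields. The map must be defined directly, as you later suggest, by the derivation action: if $X\in\mathfrak{K}(\mathcal{M})$ and $\phi\in\mathcal{Q}(\mathcal{M})$ then $X\phi\in\mathcal{Q}(\mathcal{M})$, because the local flow of $X$ is affine where defined and hence preserves the quasi-Einstein equation; the identity $\Psi\circ\Phi_t^X=\exp(t\bar X)\circ\Psi$ then holds wherever the local flow exists, which is all the argument needs. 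With these corrections, what remains is the case-by-case determination of $\mathfrak{K}$ and of the matrices $\bar X$, which you rightly flag as the bulk of the labor; it is comparable in volume to the explicit group computations the paper performs, so nothing is saved there, but the outline is correct and the completeness verdicts you assign to each family agree with Theorem~\ref{T1.5}.
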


The structures $\mathcal{M}_1^6$, $\MM_2^6$, $\MM_3^6$, $\MM_5^6$,
$\MM_1^4$, $\MM_2^4(c)$, and $\MM_5^4(c)$ are, up to linear equivalence, the only affine Killing incomplete
Type~$\mathcal{A}$ structures on $\mathbb{R}^2$.
In Section~\ref{S3}, we will exhibit affine immersions of
these structures into affine Killing complete Type~$\mathcal{A}$ surfaces and show thereby these structures can be affine Killing completed.

\subsection{The geodesic equations}
In Section~\ref{S5}, we will establish the following result that reduces
 the system of geodesic equations to a single ODE in the context of Type~$\mathcal{A}$ structures on $\mathbb{R}^2$. This will simplifiy our subsequent analysis enormously; it is exactly this step which fails for the Type~$\mathcal{B}$ geometries and which renders the analysis
 of the geodesic structure of the Type~$\mathcal{B}$ geometries so difficult.

\begin{theorem}\label{T1.6} Let $\mathcal{M}$ be a Type~$\mathcal{A}$ surface.
There exists a linear function $\varphi$ so that
$\mathcal{Q}(\mathcal{M})=e^{\varphi}\operatorname{Span}\{\pone,\phi_1,\phi_2\}$ and so that
the map $\Phi:=(\phi_1,\phi_2)$ defines an immersion of $\mathbb{R}^2$ in $\mathbb{R}^2$. Any geodesic on $\mathcal{M}$
locally has the form $\sigma(t)=\Phi^{-1}(\psi_\sigma(t)u_\sigma+v_\sigma)$ for some smooth function $\psi_\sigma$ and for
suitably chosen vectors $u_\sigma$ and $v_\sigma$ in $\mathbb{R}^2$.
\end{theorem}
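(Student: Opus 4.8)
The plan is to reduce the geodesic structure of $\mathcal{M}$ to that of a flat connection via strong projective equivalence, and then to use the solutions of the quasi-Einstein equation as flat affine coordinates. First I would invoke Lemma~\ref{L1.1} to produce a linear function $\varphi$ which provides a strong projective equivalence from $\mathcal{M}$ to a flat Type~$\mathcal{A}$ geometry ${}^\varphi\mathcal{M}=(\mathbb{R}^2,\tilde\nabla)$ and which satisfies $e^{-\varphi}\in\mathcal{Q}(\mathcal{M})$. By Theorem~\ref{T1.2}~(4) we have $\mathcal{Q}({}^\varphi\mathcal{M})=e^\varphi\mathcal{Q}(\mathcal{M})$, so $\pone=e^\varphi e^{-\varphi}\in\mathcal{Q}({}^\varphi\mathcal{M})$. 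Since ${}^\varphi\mathcal{M}$ is flat it is strongly projectively flat, whence $\dim\{\mathcal{Q}({}^\varphi\mathcal{M})\}=3$ by Theorem~\ref{T1.2}~(3); I may therefore write $\mathcal{Q}({}^\varphi\mathcal{M})=\operatorname{Span}\{\pone,\phi_1,\phi_2\}$ and conclude $\mathcal{Q}(\mathcal{M})=e^{-\varphi}\operatorname{Span}\{\pone,\phi_1,\phi_2\}$. Taking the linear function in the statement to be $-\varphi$ then establishes the first assertion.

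Next, because ${}^\varphi\mathcal{M}$ is flat, its symmetrized Ricci tensor vanishes and the quasi-Einstein equation for $\tilde\nabla$ reduces to $\mathcal{H}\phi=0$; thus the $1$-forms $d\phi_1$ and $d\phi_2$ are parallel with respect to $\tilde\nabla$. I would then show that $\Phi=(\phi_1,\phi_2)$ is an immersion. The key observation is that a parallel tensor field on the connected manifold $\mathbb{R}^2$ which vanishes at one point vanishes identically. Hence if $c_1\,d\phi_1+c_2\,d\phi_2$ failed to have full rank at some point, this parallel $1$-form would vanish everywhere, forcing $c_1\phi_1+c_2\phi_2$ to be constant and contradicting the linear independence of $\{\pone,\phi_1,\phi_2\}$. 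Therefore $d\phi_1\wedge d\phi_2$ is nowhere zero and $\Phi$ is an immersion, hence a local diffeomorphism of $\mathbb{R}^2$.

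With $\Phi$ an immersion, $(\phi_1,\phi_2)$ serve as local coordinates. Since $d\phi_1,d\phi_2$ are parallel and form a coframe, the dual coordinate frame $\partial_{\phi_1},\partial_{\phi_2}$ is parallel as well, so the Christoffel symbols of $\tilde\nabla$ vanish in these coordinates; equivalently, $\Phi$ pulls back the standard flat connection on $\mathbb{R}^2$ to $\tilde\nabla$. Consequently the geodesics of $\tilde\nabla$ are, in the coordinates $\Phi$, affinely parametrized straight lines. Finally, strong projective equivalence does not alter the unparametrized geodesics (as noted after Theorem~\ref{T1.2}), so every $\nabla$-geodesic $\sigma$ coincides, as a point set, with a $\tilde\nabla$-geodesic. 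Writing that straight line as $\Phi(\sigma(t))=\psi_\sigma(t)u_\sigma+v_\sigma$, where $\psi_\sigma$ records the reparametrization, yields $\sigma(t)=\Phi^{-1}(\psi_\sigma(t)u_\sigma+v_\sigma)$ locally, as claimed.

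I expect the main obstacle to be the immersion property of $\Phi$: the passage to a flat model and the explicit form of the geodesics are clean translations through Lemma~\ref{L1.1} and Theorem~\ref{T1.2}, but verifying that the two quasi-Einstein solutions $\phi_1,\phi_2$ have pointwise independent differentials is exactly where the flatness of ${}^\varphi\mathcal{M}$ enters essentially, through the rigidity of parallel $1$-forms on the connected space $\mathbb{R}^2$.
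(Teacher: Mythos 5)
Your proposal is correct, and it shares the paper's overall skeleton: flatten $\mathcal{M}$ via Lemma~\ref{L1.1}, write $\mathcal{Q}(\mathcal{M})=e^{\varphi}\operatorname{Span}\{\pone,\phi_1,\phi_2\}$, show $\Phi=(\phi_1,\phi_2)$ is an immersion, and transfer the straight-line geodesics of the flat model back through the strong projective equivalence. Where you genuinely diverge is in the two technical steps. For the immersion property, the paper evaluates the $1$-jet map $\Xi_P(\phi)=\{\phi,\partial_{x^1}\phi,\partial_{x^2}\phi\}(P)$, which is injective on $\mathcal{Q}(\mathcal{M})$ by Theorem~\ref{T1.2}~(1) and hence bijective by the dimension count, and reads off the pointwise independence of $d\phi_1$ and $d\phi_2$; you instead observe that on the flat side the quasi-Einstein equation degenerates to $\mathcal{H}\phi=0$, so $d\phi_1$ and $d\phi_2$ are parallel, and a parallel $1$-form vanishing at one point vanishes identically on the connected surface. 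For identifying $\tilde\nabla$ with the pullback of the standard flat connection, the paper invokes the rigidity statement Theorem~\ref{T1.2}~(5) applied to $\Phi^*\mathcal{Q}(\MM_0^6)=\mathcal{Q}(\tilde{\mathcal{M}})$; you instead note that the frame dual to the parallel coframe $\{d\phi_1,d\phi_2\}$ is parallel, so the Christoffel symbols of $\tilde\nabla$ vanish in the coordinates $(\phi_1,\phi_2)$. Your route is more elementary and purely local; in particular it sidesteps the point that Theorem~\ref{T1.2}~(5) is stated for global diffeomorphisms while $\Phi$ is a priori only a local diffeomorphism, a point the paper passes over silently. What the paper's route buys is uniformity and generality: its appeals to Theorem~\ref{T1.2} use only unique continuation and strong projective flatness rather than flatness of the target connection, and this is the same machinery reused verbatim for Theorems~\ref{T3.1} and~\ref{T8.1}.
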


Theorem~\ref{T1.6} is only a local result; however, since we are working
in the real analytic setting, this does not affect our ansatz. This point arises in the analysis of Section~\ref{S6.1.6} for example.
Our study of the geodesic structure in Type~$\mathcal{A}$ geometries in Section~\ref{S7} will
be based on Theorem~\ref{T1.6} and upon a knowledge
of $\mathcal{Q}(\mathcal{M})$ which is an analytic invariant; it is not simply a straightforward exercise
in computer algebra. The geodesic equation is a linked pair of non-linear equations in 1-variable; Theorem~\ref{T1.6}
reduces consideration to finding a single function of 1-variable. This approach permits us to determine in Section~\ref{S7}
all the geodesics of the affine manifolds $\MM_i^j(\cdot)$ for $j=4$ and $j=6$; for $j=2$, we obtained
ODEs we could not solve although we did obtain sufficient information to establish whether or
not these geometries were geodesically complete. D'Ascanio et al. \cite{DGP17} determined
which non-flat Type~$\mathcal{A}$ geometries were geodesically complete using a very different approach.
In Section~\ref{S6}, we will establish the following result which extends their results by
taking into account the flat geometries; we believe it is a more straightforward treatment -- it also yields more
information.

\begin{theorem}\label{T1.7}
Let $\mathcal{M}=(\mathbb{R}^2,\nabla)$ be a Type~$\mathcal{A}$ surface.
Then $\mathcal{M}$ is geodesically complete if and only
$\mathcal{M}$ is linearly equivalent to $\MM_0^6$, to $\MM_4^6$, to $\MM_3^4(-\frac12)$,
or to $\MM_2^2(-1,a)$ for some $a$.
\end{theorem}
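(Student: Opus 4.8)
The plan is to combine the linear classification of Theorem~\ref{T1.4} with the geodesic reduction of Theorem~\ref{T1.6}, thereby replacing the nonlinear geodesic system of \eqref{E1.a} by a single scalar ODE that can be analyzed model by model. First I would note that geodesic completeness is invariant under linear equivalence: a linear isomorphism intertwining two Type~$\mathcal{A}$ connections carries geodesics to geodesics and alters the affine parameter only by an affine reparametrization, so a maximal geodesic is defined on all of $\mathbb{R}$ for one geometry if and only if the same holds for the other. By Theorem~\ref{T1.4} every Type~$\mathcal{A}$ surface is linearly equivalent to one of the normal forms $\MM_i^j(\cdot)$ of Definition~\ref{D1.3}, so it suffices to decide completeness for each of these finitely many families.

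For a fixed model write $\mathcal{Q}(\mathcal{M})=e^{\mu}\operatorname{Span}\{\pone,\phi_1,\phi_2\}$ as in Theorem~\ref{T1.6}, with $\mu$ linear and $\Phi=(\phi_1,\phi_2)$ an immersion; both $\mu$ and $\Phi$ are read off directly from the explicit bases in Definition~\ref{D1.3}. Let $\tilde\nabla$ be the connection with $\mathcal{Q}(M,\tilde\nabla)=\operatorname{Span}\{\pone,\phi_1,\phi_2\}$. By Theorem~\ref{T1.2} this connection is strongly projectively flat, it is strongly projectively equivalent to $\nabla$ with factor $-\mu$, and $\Phi$ intertwines it with the standard flat structure, so $\tilde\nabla$-geodesics become straight lines in $\Phi$-coordinates. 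If $\sigma$ is a $\nabla$-geodesic with velocity $T$, then $\nabla_T T=0$ gives $\tilde\nabla_T T=-2\,T(\mu)\,T$, so $\Phi\circ\sigma$ lies on a line $\psi u+v$ and the reparametrization $\psi$ satisfies
\[
\ddot\psi=-2\,\dot\psi\,\frac{d}{dt}\bigl(\mu\circ\sigma\bigr).
\]
Re-expressing $\mu\circ\sigma$ as a function of the line $\psi u+v$ via $\Phi^{-1}$ closes this into a scalar equation in $\psi$. The geometry is then geodesically complete precisely when, for every choice of $u,v$ and every admissible initial condition, the solution $\psi$ exists for all $t\in\mathbb{R}$ \emph{and} the point $\psi(t)u+v$ remains in the image $\Phi(\mathbb{R}^2)$ for all $t$.

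I would then carry this out stratum by stratum. For the flat models ($j=6$) and the rank-one models ($j=4$) the function $\mu$ becomes affine or logarithmic in the line coordinate, and after the substitution $w=\psi u^1+v^1$ the equation takes the separable form $w\ddot w=\kappa\dot w^2$, with $\kappa$ fixed by the model; its solution is a global exponential exactly when $\kappa=1$ and otherwise a power of $t$. Completeness can then fail by one of two mechanisms, both visible in the closed form: either $\psi$ escapes to infinity in finite time, or $\psi(t)u+v$ reaches $\partial\Phi(\mathbb{R}^2)$ in finite time. When $\Phi$ is a global diffeomorphism onto $\mathbb{R}^2$ and $\mu$ is constant, as for $\MM_0^6$ and $\MM_4^6$, the parameter is affine and the lines fill the plane, so every geodesic is complete; for $\MM_4^4(c)$, where $\Phi$ is onto $\mathbb{R}^2$ but $\mu$ is non-constant, the reparametrization $\psi$ itself blows up in finite time, and for $\MM_1^6,\MM_2^6,\MM_3^6,\MM_5^6$ and $\MM_1^4,\MM_2^4(c),\MM_5^4(c)$ the image is a proper open set (half-plane, quadrant, or punctured plane) whose boundary is met in finite time. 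The single exceptional value $\kappa=1$, realized only by $\MM_3^4(-\tfrac12)$, forces $w$ to be a global exponential that approaches but never reaches the boundary of its half-plane image, yielding completeness.

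The main obstacle is the rank-two stratum ($j=2$). Here $\phi_1,\phi_2$ are products of an exponential with $\cos,\sin$ or with a polynomial, so $\Phi$ is a log-polar or shear-type map onto the punctured plane or a half-plane, and $\mu\circ\sigma$ acquires an angular contribution that makes the scalar equation non-integrable in closed form. For these I would not solve the equation but argue qualitatively — monotonicity and finite-time blow-up estimates for $\psi$, together with the behaviour of the line $\psi u+v$ relative to the excluded origin — to decide completeness. This analysis is what isolates $\MM_2^2(-1,a)$ as the only complete rank-two geometry and shows $\MM_1^2(\cdot)$, $\MM_3^2(c)$, $\MM_4^2(\pm1)$, and $\MM_2^2(b_1,b_2)$ with $b_1\ne-1$ to be incomplete. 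Assembling the three strata yields exactly the list $\MM_0^6$, $\MM_4^6$, $\MM_3^4(-\tfrac12)$, and $\MM_2^2(-1,a)$.
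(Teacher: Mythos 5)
Your strategy for the flat ($j=6$) and rank-one ($j=4$) strata is sound and is essentially the paper's: reduce to the normal forms of Theorem~\ref{T1.4}, use the ansatz of Theorem~\ref{T1.6} to realize geodesics as reparametrized lines in the $\Phi$-coordinates, and decide completeness from the explicit form of the reparametrization (exponential versus power-law) together with whether the line stays inside $\Phi(\mathbb{R}^2)$. One small imprecision: the exceptional value $\kappa=1$ is realized not only by $\MM_3^4(-\frac12)$ but also by $\MM_2^4(-\frac12)$, which is nevertheless incomplete because its image is a quadrant rather than a half-plane; your two-mechanism dichotomy does cover this, but the sentence claiming $\kappa=1$ occurs ``only'' for $\MM_3^4(-\frac12)$ is not literally correct.

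The genuine gap is the rank-two stratum, and specifically the completeness of $\MM_2^2(-1,a)$, which is the heart of the theorem. Here your proposal stops being a proof: you assert that ``monotonicity and finite-time blow-up estimates'' will ``isolate $\MM_2^2(-1,a)$ as the only complete rank-two geometry,'' but no such estimates are given, and the paper itself records that the scalar ODE produced by the Theorem~\ref{T1.6} ansatz could not be solved in this stratum. Incompleteness of $\MM_1^2(\cdot)$, $\MM_3^2(c)$, $\MM_4^2(\pm1)$, and $\MM_2^2(b_1,b_2)$ with $b_1\ne-1$ can be settled without solving anything, by exhibiting special geodesics of the form $\sigma(t)=\log(t)\,\vec\alpha$ that blow up in finite time, so that half of your plan is completable. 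But the completeness assertion requires a genuinely different idea, and the paper supplies one by adapting Bromberg--Medina: for $\MM_2^2(-1,a)$ the equations for the velocities $(u,v)=(\dot x^1,\dot x^2)$ form a homogeneous quadratic system $\partial_t(u,v)=v\cdot A(u,v)$; reparametrizing by $d\tau=v\,dt$ linearizes it, the matrix $A$ has complex eigenvalues $-a\pm\sqrt{-1}$, and since $V(\tau)=e^{-a\tau}\bigl(c_2\cos\tau+(-2c_1+ac_2)\sin\tau\bigr)$ cannot vanish along a geodesic with $v\ne0$, the $\tau$-interval has length at most $\pi$, which forces the original parameter $t$ to run over all of $\mathbb{R}$. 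Nothing in your line-plus-reparametrization framework substitutes for this: knowing that $\Phi$ maps onto the punctured plane and watching the line $\psi u+v$ avoid the origin says nothing about whether $\psi$ itself is globally defined, which is precisely the question. Without this (or an equivalent) argument, the ``if'' direction of the theorem for $\MM_2^2(-1,a)$ remains unproved.
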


The affine Killing vector fields of a Type~$\mathcal{A}$ geometry are real analytic. From this it follows that if
 $\tilde\MM$ is an affine surface which is modeled on a Type~$\mathcal{A}$ geometry $\mathcal{M}=(\mathbb{R}^2,\nabla)$ (where $\nabla$
 has constant Christoffel symbols), then $\tilde\MM$ is real analytic.
We say that a Type~$\mathcal{A}$ structure $\mathcal{M}$ on $\mathbb{R}^2$ is essentially geodesically incomplete if there is
no surface $\tilde\MM$ which is modeled on $\MM$ and which is geodesically complete.
It will follow from the analysis of Section~\ref{S6} that
any non-flat Type~$\mathcal{A}$ structure on $\mathbb{R}^2$ which is geodesically incomplete but not essentially geodesically incomplete
is linearly equivalent either to $\MM_2^4(-\frac12)$
or to $\MM_5^4(0)$; up to linear equivalence, $\MM_2^4(-\frac12)$ and
$\MM_5^4(0)$ are the only non-flat Type~$\mathcal{A}$ structures on $\mathbb{R}^2$ which can
be geodesically completed. This is analogous to the situation when we considered the completion of affine Killing incomplete Type~$\mathcal{A}$
structures on $\mathbb{R}^2$.

\subsection{Type~$\mathcal{B}$ geometries}

$\nabla$ is a left invariant connection on the $ax+b$ group
$\mathbb{R}^+\times\mathbb{R}$ if and only if $\Gamma=(x^1)^{-1}\Gamma(a,b,c,d,e,f)$; we denote the
corresponding structure by $\mathcal{N}((x^1)^{-1}\Gamma(a,b,c,d,e,f))$.
\begin{definition}\label{D1.8}\rm We define the following Type~$\mathcal{B}$ affine structures $\mathcal{N}_i^j(\cdot)$
on $\mathbb{R}^+\times\mathbb{R}$; a direct computation then establishes $\mathcal{Q}$.
\par\quad$\mathcal{N}_0^6:=\Gamma(0,0,0,0,0,0)$, $\mathcal{Q}(\mathcal{N}_0^6)=\operatorname{Span}\{\pone,x^1,x^2\}$.
\par\quad$\mathcal{N}_1^6(\pm):=\mathcal{N}((x^1)^{-1}\Gamma(1,0,0,0,\pm1,0))$,
\par\qquad\qquad
$\mathcal{Q}(\mathcal{N}_1^6(\pm))=\operatorname{Span}\{\pone,x^2,(x^1)^2\pm(x^2)^2\}$.
\par\quad$\mathcal{N}_2^6(c):=\mathcal{N}((x^1)^{-1}\Gamma(c-1,0,0,c,0,0))$ for $c\ne0$,\par\qquad\qquad
$\mathcal{Q}(\mathcal{N}_2^6(c))=\operatorname{Span}\{\pone,(x^1)^c,(x^1)^cx^2\}$.
\par\quad$\mathcal{N}_3^6:=\mathcal{N}((x^1)^{-1}\Gamma(-2,1,0,-1,0,0))$,
$\mathcal{Q}(\mathcal{N}_3^6)=\operatorname{Span}\{\pone,\frac1{x^1},\frac{x^2}{x^1}+\log(x^1)\}$.
\par\quad$\mathcal{N}_4^6:=\mathcal{N}((x^1)^{-1}\Gamma(0,1,0,0,0,0))$,
$\mathcal{Q}(\mathcal{N}_4^6)=\operatorname{Span}\{\pone,x^1,x^2+x^1\log(x^1)\}$.
\par\quad$\mathcal{N}_5^6:=\mathcal{N}((x^1)^{-1}\Gamma(-1,0,0,0,0,0))$,
$\mathcal{Q}(\mathcal{N}_5^6)=\operatorname{Span}\{\pone,\log(x^1),x^2\}$.
\par\quad$\mathcal{N}_6^6(c):=\mathcal{N}((x^1)^{-1}\Gamma(c,0,0,0,0,0))$ for $c\ne0,-1$,\par\qquad\qquad
$\mathcal{Q}(\mathcal{N}_6^6(c))=\operatorname{Span}\{\pone,(x^1)^{1+c},x^2\}$.
\par\quad$\mathcal{N}_1^4(\kappa):=\mathcal{N}((x^1)^{-1}(2\kappa,1,0,\kappa,0,0))$ for $\kappa\notin\{0,-1\}$,
\par\qquad\qquad$\mathcal{Q}(\mathcal{N}_1^4(\kappa))=\operatorname{Span}\{(x^1)^\kappa, (x^1)^{\kappa+1}, (x^1)^\kappa(x^2+x^1\log x^1)\}$.
\par\quad$\mathcal{N}_2^4(\kappa,\theta):=\mathcal{N}((x^1)^{-1}\Gamma(
2\kappa+\theta-1,0,0,\kappa,0, 0))$ for $\kappa\notin\{0,-\theta\}$ and $\theta\ne0$,
\par\qquad\qquad$\mathcal{Q}(\mathcal{N}_2^4(\kappa,\theta))=\operatorname{Span}\{(x^1)^\kappa, (x^1)^\kappa x^2,  (x^1)^{\kappa+\theta} \}$.
\par\quad$\mathcal{N}_3^4(\kappa):=\mathcal{N}((x^1)^{-1}\Gamma(2\kappa-1,0,0,\kappa,0,0))$ for $\kappa\ne0$,
\par\qquad\qquad$\mathcal{Q}({\mathcal{N}_3^4(\kappa))}=\operatorname{Span}\{(x^1)^\kappa, (x^1)^\kappa x^2, (x^1)^\kappa \log x^1 \}$.
\par\quad$\mathcal{N}_1^3(\pm):=\mathcal{N}((x^1)^{-1}\Gamma(-\frac32,0,0,-\frac12,\mp\frac12,0))$,
$\mathcal{Q}(\mathcal{N}_1^3(\pm))=\{0\}$.
\par\quad$\mathcal{N}_2^3(c):=\mathcal{N}((x^1)^{-1}\Gamma(-\frac32,0,1,-\frac12,c,2))$,
$\mathcal{Q}(\mathcal{N}_2^3(c))=\{0\}$.
\par\quad$\mathcal{N}_3^3:=\mathcal{N}((x^1)^{-1}\Gamma(-1,0,0,-1,-1,0))$,
$\mathcal{Q}(\mathcal{N}_3^3)=\operatorname{Span}\{\frac1{x^1},\frac{x^2}{x^1},\frac{(x^2)^2-(x^1)^2}{x^1}\}$.
\par\qquad\qquad This is the affine structure of the
Lorentzian-hyperbolic plane given by\par\qquad\qquad  the metric $ds^2=\frac{(dx^1)^2-(dx^2)^2}{(x^1)^2}$.
\par\quad$\mathcal{N}_4^3:=\mathcal{N}((x^1)^{-1}\Gamma(-1,0,0,-1,1,0))$,
$\mathcal{Q}(\mathcal{N}_4^3)=\operatorname{Span}\{\frac1{x^1},\frac{x^2}{x^1},\frac{(x^2)^2+(x^1)^2}{x^1}\}$.
\par\qquad\qquad This is the affine structure of the hyperbolic plane given by the metric
\par\qquad\qquad $ds^2=\frac{(dx^1)^2+(dx^2)^2}{(x^1)^2}$.
\end{definition}

We refer to Brozos-V\'{a}zquez et al.~\cite{BGG18} for the proof of Assertions~(1--3) in Theorem~\ref{T1.9} below.
Assertion~(4) will be established in Section~\ref{S8} and is
the appropriate generalization of Theorem~\ref{T1.4}~(4) to this setting; unlike
the case of the Type~$\mathcal{A}$ geometries, there is no classification for the generic case
$\dim\{\mathfrak{K}(\mathcal{N})\}=2$ and this is why the determination of which of these geometries is
geodesically complete is unsettled.

\begin{theorem}\label{T1.9}
Let $\mathcal{N}$ be a Type~$\mathcal{B}$ structure on $\mathbb{R}^+\times\mathbb{R}$.
\begin{enumerate}
\item $\dim\{\mathfrak{K}\}\in\{2,3,4,6\}$.
\item $\dim\{\mathfrak{K}(\mathcal{N})\}=3$ if and only if $\mathcal{N}$ is linearly equivalent to $\mathcal{N}_i^3(\cdot)$ for some $i$.
\item The following assertions are equivalent.
\begin{enumerate}
\item $\dim\{\mathfrak{K}(\mathcal{N})\}=4$.
\item $\mathcal{N}$ is linearly equivalent to $\mathcal{N}_i^4(\cdot)$ for some $i$;
\item $\mathcal{N}$ is also Type~$\mathcal{A}$.
\end{enumerate}
\item The following assertions are equivalent.
\begin{enumerate}
\item $\dim\{\mathfrak{K}(\mathcal{N})\}=6$.
\item $\mathcal{N}$ is linearly equivalent to $\mathcal{N}_i^6(\cdot)$ for some $i$.
\item $\mathcal{N}$ is flat.\end{enumerate}\end{enumerate}
\end{theorem}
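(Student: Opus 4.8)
The plan is to prove Assertion~(4), since Assertions~(1)--(3) are quoted from \cite{BGG18}, and to link its three conditions through a single algebraic reformulation of flatness. First I would record the \emph{structure matrices}
\[
C_1:=\begin{pmatrix} a & c\\ b & d\end{pmatrix},\qquad C_2:=\begin{pmatrix} c & e\\ d & f\end{pmatrix},
\]
which are the matrices of $\nabla_{\partial_1}$ and $\nabla_{\partial_2}$ on the coordinate frame after stripping the common factor $(x^1)^{-1}$. Because the Christoffel symbols do not depend on $x^2$ and depend on $x^1$ only through $(x^1)^{-1}$, the sole independent curvature operator $R(\partial_1,\partial_2)$ is $(x^1)^{-2}$ times a constant matrix, and a direct computation collapses the flatness condition to the single identity $C_2=[C_1,C_2]$. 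One checks immediately that each $\mathcal{N}_i^6$ in Definition~\ref{D1.8} satisfies this identity, so every geometry on the list is flat; since the naming convention records $\dim\{\mathfrak{K}(\mathcal{N}_i^6)\}=6$, this already gives (b)$\Rightarrow$(c) and (b)$\Rightarrow$(a). The equivalence (a)$\Leftrightarrow$(c) is then immediate: the bound $\dim\{\mathfrak{K}\}\le m+m^2=6$ recalled in the Introduction yields (a)$\Rightarrow$(c), while a flat torsion-free connection on the simply connected manifold $\mathbb{R}^+\times\mathbb{R}$ carries the full affine algebra $\mathfrak{aff}(2,\mathbb{R})$ of dimension $6$, giving (c)$\Rightarrow$(a).

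It remains to prove (c)$\Rightarrow$(b), i.e. to classify the solutions of $C_2=[C_1,C_2]$ up to linear equivalence. I would first pin down the relevant group: a linear map can preserve the Type~$\mathcal{B}$ normal form $(x^1)^{-1}\Gamma(\cdots)$ only if it preserves the factor $(x^1)^{-1}$, so it must be lower triangular, $T=\left(\begin{smallmatrix}\lambda&0\\\nu&\mu\end{smallmatrix}\right)$ with $\lambda>0$ and $\mu\ne0$. Tracking the induced action on $(C_1,C_2)$ shows it to be conjugation by $T$, together with the rescaling $C_2\mapsto(\lambda/\mu)\,TC_2T^{-1}$ and a shear that adds a multiple of $C_2$ to $C_1$; in particular the eigenvalues of $C_1$ are invariant. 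The identity $C_2=[C_1,C_2]$ says precisely that $C_2$ is a $1$-eigenvector of $\operatorname{ad}_{C_1}$, and since the nonzero eigenvalues of $\operatorname{ad}_{C_1}$ are $\pm(\alpha-\beta)$ for the eigenvalues $\alpha,\beta$ of $C_1$, either $C_2=0$ or $C_1$ has two real eigenvalues differing by exactly $\pm1$. When $C_2=0$ one has $\Gamma(a,b,0,0,0,0)$ with $C_1=\left(\begin{smallmatrix}a&0\\b&0\end{smallmatrix}\right)$; here $a=\operatorname{tr}C_1$ is invariant while $b$ can be scaled away or sheared away, producing exactly $\mathcal{N}_0^6$ $(a=b=0)$, $\mathcal{N}_4^6$ $(a=0,\ b\ne0)$, $\mathcal{N}_5^6$ $(a=-1)$, and $\mathcal{N}_6^6(c)$ $(a=c\notin\{0,-1\})$, the value $c=-1$ being singled out because $(x^1)^{1+c}$ degenerates into $\log x^1$ there.

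I expect the main obstacle to be the remaining case $C_2\ne0$, which is delicate precisely because the admissible group is only the lower-triangular subgroup rather than all of $\operatorname{GL}(2,\mathbb{R})$: one cannot interchange $x^1$ and $x^2$, so the ordering of the eigenvalues of $C_1$ and the upper-versus-lower placement of $C_2$ become genuine invariants, and it is this that separates $\mathcal{N}_1^6(\pm)$ (eigenvalues $1,0$ with $C_2$ in the upper-right slot) from $\mathcal{N}_2^6(c)$ (diagonal $C_1$ with $C_2$ in the lower-left slot). More subtle still is $\mathcal{N}_3^6$: although its $C_1$ is diagonalizable by a lower-triangular matrix in isolation, the shear needed to diagonalize it feeds $C_2$ back into $C_1$, and a computation shows the lower-left entry of the transformed $C_1$ is pinned at a fixed nonzero value, so $\mathcal{N}_3^6$ is never equivalent to the diagonal form $\mathcal{N}_2^6(-1)$. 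To certify the list is irredundant I would confirm the pairwise inequivalences with the conformal invariant $\mathcal{Q}$ of Theorem~\ref{T1.2} — for instance the $\log x^1$ term present in $\mathcal{Q}(\mathcal{N}_3^6)$ but absent from $\mathcal{Q}(\mathcal{N}_2^6(-1))$ detects exactly this distinction — and then match the degenerate parameter values against the exclusions $c\ne0$ and $c\ne-1$ imposed in Definition~\ref{D1.8}.
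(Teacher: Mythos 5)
Your approach is sound and it is genuinely different from the paper's. The paper proves the key implication (flat $\Rightarrow$ equivalent to some $\mathcal{N}_i^6$) in Section~7 without ever touching the curvature tensor: it uses the developing map to write $\mathcal{Q}(\mathcal{N})=\operatorname{Span}\{\pone,\phi^1,\phi^2\}$, decomposes $\mathcal{Q}_{\mathbb{C}}$ into generalized eigenspaces of the Killing field $X=x^1\partial_{x^1}+x^2\partial_{x^2}$, exploits $[X,\partial_{x^2}]=-\partial_{x^2}$ to classify the possible module structures, and then recovers the connection from $\mathcal{Q}$ via Theorem~\ref{T1.2}. You instead reduce flatness to the matrix identity $C_2=[C_1,C_2]$ (which is correct: $R(\partial_1,\partial_2)=(x^1)^{-2}([C_1,C_2]-C_2)$) and run an orbit analysis under the lower-triangular group, which you identify correctly along with its action (conjugation, the rescaling $C_2\mapsto(\lambda/\mu)TC_2T^{-1}$, and the shear on $C_1$). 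Your route is more elementary and self-contained; it needs the quasi-Einstein machinery only as an optional invariant for irredundancy, which the theorem as stated does not even require. Your handling of $C_2=0$, of (b)$\Rightarrow$(c), and of (a)$\Leftrightarrow$(c) is complete and correct (the "naming convention" remark for (b)$\Rightarrow$(a) is circular, but harmless since (b)$\Rightarrow$(c)$\Rightarrow$(a) covers it).

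Two points must be shored up before this is a proof. First, your claim that "the eigenvalues of $C_1$ are invariant" is false for the group action itself: the shear replaces $C_1$ by $C_1+sC_2$, which in general changes eigenvalues. It is true on the flat locus, but that needs a small lemma: $C_2=[C_1,C_2]$ forces $\operatorname{tr}(C_2)=\operatorname{tr}(C_2^2)=\operatorname{tr}(C_1C_2)=0$, so $C_2$ is nilpotent and $\operatorname{tr}(C_1+sC_2)=\operatorname{tr}(C_1)$, $\det(C_1+sC_2)=\det(C_1)$. Second, the case $C_2\ne0$ — the heart of (c)$\Rightarrow$(b) — is only described, not executed. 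It does go through with exactly the ingredients you set up, so include the computation: the kernel-image line of the nilpotent $C_2$ is either $\operatorname{Span}\{e_2\}$ (fixed by every lower-triangular map) or can be moved to $\operatorname{Span}\{e_1\}$. In the latter case $C_2=\left(\begin{smallmatrix}0&q\\0&0\end{smallmatrix}\right)$, so the torsion symmetry built into your definition (second column of $C_1$ equals first column of $C_2$) gives $C_1=\left(\begin{smallmatrix}a&0\\b&0\end{smallmatrix}\right)$, and the identity forces $b=0$, $a=1$; since $q$ rescales only by $\lambda^2/\mu^2>0$, its sign is an invariant and you get exactly $\mathcal{N}_1^6(\pm)$. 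In the former case $C_2=\left(\begin{smallmatrix}0&0\\d&0\end{smallmatrix}\right)$ with $d\ne0$, so $C_1=\left(\begin{smallmatrix}a&0\\b&d\end{smallmatrix}\right)$, the identity forces $a=d-1$, and the shear sends $b\mapsto(\mu b-\nu(1+d))/\lambda$: for $d\ne-1$ you can kill $b$ and obtain $\mathcal{N}_2^6(d)$, while for $d=-1$ the vanishing of $b$ is an invariant, yielding $\mathcal{N}_2^6(-1)$ when $b=0$ and $\mathcal{N}_3^6$ (after rescaling $b$ to $1$) when $b\ne0$. With these two additions your argument is complete and matches the list in Definition~\ref{D1.8} exactly.
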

We will prove the following result in Section~\ref{S8}.

\begin{theorem}\label{T1.10}
Let $\mathcal{M}$ be a Type~$\mathcal{B}$ structure on $\mathbb{R}^+\times\mathbb{R}$.
\begin{enumerate}
\item If $\dim\{\mathfrak{K}(\mathcal{M})\}=2$, then $\mathcal{M}$ is affine Killing complete.
\item If $\dim\{\mathfrak{K}(\mathcal{M})\}=3$, then $\mathcal{M}$ is affine Killing complete if and only if
$\mathcal{M}$ is linearly equivalent to the hyperbolic plane.
\item If $\dim\{\mathfrak{K}(\mathcal{M})\}=4$, then $\mathcal{M}$ is affine Killing complete.
\item If $\dim\{\mathfrak{K}(\mathcal{M})\}=6$, then $\mathcal{M}$ is affine Killing complete if and only
if $\mathcal{M}$ is linearly equivalent to $\mathcal{N}_0^6$ or $\mathcal{N}_5^6$.\end{enumerate}
\end{theorem}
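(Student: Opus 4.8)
The plan is to stratify by $\dim\{\mathfrak{K}(\mathcal{M})\}$ using Theorem~\ref{T1.9} and, in every stratum, to reduce affine Killing completeness to an explicit integration of flows on the half plane $\{x^1>0\}$. Because these geometries are real analytic, the affine Killing equations form a linear system whose solutions on each homogeneous background in Definition~\ref{D1.8} are elementary; I would begin by solving this system to obtain a spanning set of $\mathfrak{K}(\mathcal{M})$ in each normal form. Completeness of a field $X$ is then equivalent to the maximal integral curves of the first order system $\dot\gamma=X\circ\gamma$ being defined for all parameter values, and incompleteness is detected precisely when an integral curve reaches the boundary $x^1=0$, escapes to $x^1=\infty$, or blows up in $x^2$ at a finite parameter.

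For the completeness assertions I would argue structurally wherever possible. When $\dim\{\mathfrak{K}(\mathcal{M})\}=2$ no normal form is available from Theorem~\ref{T1.9}, but none is needed: here $\mathfrak{K}(\mathcal{M})$ is exactly the two dimensional Lie algebra of the transitive left $ax+b$ action, which integrates to a globally defined action on $\mathbb{R}^+\times\mathbb{R}$, so every affine Killing field is complete, giving (1) uniformly. For the Riemannian hyperbolic plane $\mathcal{N}_4^3$ the affine Killing fields coincide with the metric Killing fields, since a surface of constant nonzero curvature admits no nontrivial homothety; these generate isometries of a complete Riemannian manifold and are therefore complete, yielding the forward half of (2). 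The flat model $\mathcal{N}_0^6$ is the standard flat plane, on which the full six dimensional affine algebra acts completely; and $\mathcal{N}_5^6$ is globalized to the flat plane by the substitution $\xi=\log x^1$, under which its geodesics $x^1=Ae^{Ct}$ become straight lines, so it too is complete.

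For the incomplete assertions I would exhibit, in each remaining normal form, one affine Killing field whose flow leaves the half plane at finite parameter. For $\mathcal{N}_1^3(\pm)$, $\mathcal{N}_2^3(c)$, the Lorentzian hyperbolic plane $\mathcal{N}_3^3$, and the flat models $\mathcal{N}_1^6(\pm)$, $\mathcal{N}_2^6(c)$, $\mathcal{N}_3^6$, $\mathcal{N}_4^6$, $\mathcal{N}_6^6(c)$, the extra (non group) Killing field yields a separable or Riccati flow in $x^1$ that I can integrate in closed form and read off a finite escape time; for the flat family $\Gamma_{11}{}^1=c/x^1$ this is transparent, since the geodesic $x^1=((c+1)(Kt+\text{const}))^{1/(c+1)}$ reaches $x^1=0$ in finite time precisely when $c\neq-1$. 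The decisive contrast is between $\mathcal{N}_4^3$ and $\mathcal{N}_3^3$: in the upper half plane model the third generator satisfies $\dot z=z^2$ for $z=x^2+\sqrt{-1}\,x^1$, whose complex pole keeps the flow defined and inside $\{x^1>0\}$ for all real parameters, whereas for the Lorentzian metric $ds^2=\{(dx^1)^2-(dx^2)^2\}/(x^1)^2$ the analogous boost generator $2x^1x^2\,\partial_1+((x^1)^2+(x^2)^2)\,\partial_2$ obeys $\dot u=u^2$ along the null coordinate $u=x^1+x^2$ and therefore blows up in finite time; this gives the reverse half of (2).

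The main obstacle I anticipate is assertion (3). These four dimensional geometries are simultaneously Type~$\mathcal{A}$, yet by Theorem~\ref{T1.5} several Type~$\mathcal{A}$ structures with four dimensional Killing algebra are affine Killing incomplete, so the content of (3) is that the half plane realization is nonetheless always complete. The resolution is that the diffeomorphism intertwining the two realizations is not linear---it trades a translation coordinate on $\mathbb{R}^2$ for $\log x^1$ on $\mathbb{R}^+\times\mathbb{R}$---so a Killing field whose $\mathbb{R}^2$ flow would escape as an exponential becomes, on the half plane, a scaling flow $x^1\mapsto e^{s}x^1$ that preserves $\{x^1>0\}$ for all $s$. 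Making this identification precise for each of $\mathcal{N}_1^4(\kappa)$, $\mathcal{N}_2^4(\kappa,\theta)$, $\mathcal{N}_3^4(\kappa)$---so that completeness is genuinely transferred rather than merely asserted---and likewise confirming that $\mathcal{N}_5^6$ is the unique nonstandard flat model that globalizes to the whole affine plane, is where the real work lies; the remaining incompleteness statements then reduce to the closed form integrations sketched above.
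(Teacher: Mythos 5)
Your strategy coincides with the paper's proof in Section~\ref{S8}: stratify by $\dim\{\mathfrak{K}\}$ via Theorem~\ref{T1.9}; in the two-dimensional case observe that $\mathfrak{K}$ is spanned by $x^1\partial_{x^1}+x^2\partial_{x^2}$ and $\partial_{x^2}$, the generators of the global $ax+b$ action; in the three-dimensional case integrate the extra Killing field (the paper uses the same field $X=2x^1x^2\partial_{x^1}+((x^2)^2+\sigma(x^1)^2)\partial_{x^2}$ with blow-up curves $(t^{-2},-t^{-1})$ and $(-\frac1{2t},-\frac1{2t})$; your computations $\dot u=u^2$ and $\dot z=z^2$ are the same calculation, and your M\"obius argument for $\mathcal{N}_4^3$ is a nice supplement to the paper's bare assertion); and in the four-dimensional case transport the problem to Type~$\mathcal{A}$. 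The "real work" you postpone in item (3) is exactly Theorem~\ref{T8.1}: the maps $\Psi_1^4$, $\Psi_2^4(\kappa,\theta)$, $\Psi_3^4(\kappa)$ (each involving $\log x^1$) are surjective affine isomorphisms onto $\MM_3^4(\cdot)$ or $\MM_4^4(0)$, which are affine Killing complete by Theorem~\ref{T1.5}, and the verification that they are affine is immediate from Theorem~\ref{T1.2}(5) since they intertwine the spaces $\mathcal{Q}$. For the flat case the paper argues incompleteness from non-surjectivity of the embeddings $\Psi_i^6$ rather than by direct integration, but that is a cosmetic difference.

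There are, however, two genuine problems. (i) Your treatment of $\mathcal{N}_0^6$ fails: by Definition~\ref{D1.8} this structure lives on $\mathbb{R}^+\times\mathbb{R}$, not on $\mathbb{R}^2$, and $\partial_{x^1}$ is an affine Killing field of the flat connection whose flow $(x^1+t,x^2)$ exits the half-plane in finite time; so the claim that "the full six dimensional affine algebra acts completely" on it is false, and $\mathcal{N}_0^6$ is in fact affine Killing incomplete. You have inherited a defect of the statement itself: the paper's own Case~1 declares $\mathcal{N}_i^6$ incomplete for every $i\ne5$ precisely because $\Psi_i^6$ fails to be surjective, and $\Psi_0^6$ is the non-surjective inclusion of the half-plane, so the paper's proof establishes completeness only for $\mathcal{N}_5^6$, in conflict with item (4) as stated. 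However the discrepancy is resolved editorially, no argument of the kind you sketch can make the half-plane model $\mathcal{N}_0^6$ Killing complete. (ii) You infer Killing incompleteness of the remaining flat models from an incomplete geodesic. That implication is invalid in general: as the paper emphasizes, geodesic completeness implies affine Killing completeness and \emph{not} conversely, so geodesic incompleteness by itself proves nothing about Killing fields. The step is repairable for these flat structures because the affinely parametrized geodesic you display is the integral curve of an affine Killing field, namely the pullback under $\Psi_6^6(c)$ of a translation field on $\MM_0^6$; you must either make that identification explicit or integrate the Killing flow directly, as the first clause of your sentence promises. With (ii) repaired and (i) corrected (i.e., moving $\mathcal{N}_0^6$ to the incomplete list, as the paper's own proof does), your argument is sound and is essentially the paper's.
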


The question of geodesic completeness is more subtle and will not be dealt with here since, unlike the
Type~$\mathcal{A}$ geometries, we do not have a suitable parametrization of the Type~$\mathcal{B}$ surfaces
where $\dim\{\mathfrak{K}(\mathcal{N})\}=2$.

\section{The proof of Lemma~\ref{L1.1}}\label{S2}
Let $\mathcal{M}=(\mathbb{R}^2,\nabla)$ be a Type~$\mathcal{A}$ geometry.
An affine surface $\mathcal{M}$ is strongly projectively flat if and only if both
$\rho$ and $\nabla\rho$ are totally symmetric (see, for example, Eisenhart~\cite{E70} or Nomizu and Sasaki~\cite{NS}). A direct computation shows that
$\rho$ and $\nabla\rho$ are in fact totally symmetric
if $\mathcal{M}$ is Type~$\mathcal{A}$ and thus every Type~$\mathcal{A}$ surface is strongly projectively flat.
However, this argument does not show that the associated flat surface is again Type~$\mathcal{A}$ nor does it show that
the equivalence can be obtained using a linear function. We proceed as follows.
Let $\varphi(x^1,x^2)=a_1x^1+a_2x^2$ for $(a_1,a_2)\in\mathbb{R}^2$. Let $\tilde{\mathcal{M}}:={}^{-\varphi}\mathcal{M}$
have Ricci tensor $\tilde\rho$. We wish to choose $(a_1,a_2)$ so $\tilde\rho=0$. We suppose first that $\Gamma_{11}{}^2\ne0$. By
rescaling $x^2$, we may assume that $\Gamma_{11}{}^2=1$. We solve the equation $\tilde\rho_{11}=0$ for $a_2$ to obtain
$$
a_2:=a_1^2 - a_1 \Gamma_{11}{}^1 - \Gamma_{12}{}^1 + \Gamma_{11}{}^1 \Gamma_{12}{}^2 - (\Gamma_{12}{}^2)^2 + \Gamma_{22}{}^2\,.
$$
This yields
\begin{eqnarray*}
\tilde\rho_{12}&=&-\Gamma_{22}{}^1 + (a_1 - \Gamma_{12}{}^2) (a_1^2 - a_1 \Gamma_{11}{}^1 - 2 \Gamma_{12}{}^1
+ \Gamma_{11}{}^1 \Gamma_{12}{}^2 - (\Gamma_{12}{}^2)^2 +
    \Gamma_{22}{}^2)\\
\tilde\rho_{22}&=&(a_1 - \Gamma_{11}{}^1 + \Gamma_{12}{}^2)\tilde\rho_{12}\,.
\end{eqnarray*}
The crucial point is that $\tilde\rho_{12}$ divides $\tilde\rho_{22}$. Thus it suffices to choose $a_1$ so $\tilde\rho_{12}=0$.
Since $\tilde\rho_{12}$ is a monic polynomial of $a_1$,
we can find $a_1$ so $\tilde\rho_{12}=0$.  We now have $\tilde\rho=0$ so $\tilde{\mathcal{M}}$ is flat as desired.

We suppose next that $\Gamma_{11}{}^2=0$. If $\Gamma_{22}{}^1\ne0$, we can interchange the roles of $x^1$ and $x^2$
and repeat the argument given above. We may therefore assume that $\Gamma_{22}{}^1=0$ as well. We make a direct computation to see
that taking $a_1=\Gamma_{12}{}^2$
and $a_2=\Gamma_{12}{}^1$ yields $\tilde\rho=0$. Since
$\pone\in\mathcal{Q}(\tilde{\mathcal{M}})$, we conclude $e^\varphi\in\mathcal{Q}(\mathcal{M})=e^\varphi\mathcal{Q}(\tilde{\mathcal{M}})$.\qed

\section{Affine embeddings and immersions of Type~$\mathcal{A}$ structures}\label{S3}
We introduce an auxiliary affine surface $\tilde{\MM}_5^4(c):=(\mathbb{R}^2,\nabla)$
where the only (possibly) non-zero Christoffel symbols of $\nabla$ are
 $\Gamma_{22}{}^1=(1+c^2)x^1$ and $\Gamma_{22}{}^2=2c$; this is not a Type~$\mathcal{A}$ structure on $\mathbb{R}^2$. We have
$$
\mathcal{Q}(\tilde{\MM}_5^4(c))=\operatorname{Span}\{e^{cx^2}\cos(x^2),e^{cx^2}\sin(x^2),x^1\}\,.
$$
We will
show presently in Section~\ref{S3.3} that $\operatorname{Aff}(\tilde{\MM}_5^4(c))$ acts transitively on $\mathbb{R}^2$
and consequently this is a homogeneous geometry.
\begin{theorem}\label{T3.1}
\ \begin{enumerate}
\item $\PP_1^6(x^1,x^2):=(e^{x^1},x^2e^{x^1})$ is an affine embedding of $\MM_1^6$ in $\MM_0^6$.
\item $\PP_2^6(x^1,x^2):=(e^{x^2},e^{-x^1})$ is an affine embedding of $\MM_2^6$ in $\MM_0^6$.
\item $\PP_3^6(x^1,x^2):=(x^1,e^{x^2})$ is an affine embedding of $\MM_3^6$ in $\MM_0^6$.
\item $\PP_4^6(x^1,x^2):=(x^2,(x^2)^2+2x^1)$ is an affine isomorphism from $\MM_4^6$ to $\MM_0^6$.
\item $\PP_5^6(x^1,x^2)=(e^{x^1}\cos(x^2),e^{x^1}\sin(x^2))$ is an affine immersion of $\MM_5^6$ in
$\MM_0^6$.
\item $\PP_1^4(x^1,x^2):=(e^{-x^1},x^2)$ is an affine embedding of $\MM_1^4$ in $\MM_4^4(0)$.
\item $\PP_2^4(x^1,x^2):=(e^{-x^1},x^2)$ is an affine embedding of $\MM_2^4(c)$ in $\MM_3^4(c)$.
\item $\PP_4^4(c)(x^1,x^2):=(x^1+\frac12c(x^2)^2,x^2)$ is an affine isomorphism from $\MM_4^4(c)$ to $\MM_4^4(0)$.
\item $\PP_5^4(c)(x^1,x^2):=(e^{x^1},x^2)$ is an affine embedding of $\MM_5^4(c)$ in $\tilde{\MM}_5^4(c)$.
\end{enumerate}
\end{theorem}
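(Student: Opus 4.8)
The plan is to treat all nine assertions uniformly by using the quasi-Einstein operator as a complete invariant of the affine structure, thereby reducing each claim to a mechanical substitution rather than to a verification of the second-order transformation law for Christoffel symbols. The enabling observation is Theorem~\ref{T1.2}~(5): if $\mathcal{M}=(\mathbb{R}^2,\nabla)$ and $\tilde{\mathcal{M}}=(\mathbb{R}^2,\tilde\nabla)$ are strongly projectively flat and $\Phi$ is a diffeomorphism with $\Phi^*\mathcal{Q}(\tilde{\mathcal{M}})=\mathcal{Q}(\mathcal{M})$, then $\Phi^*\tilde\nabla=\nabla$, i.e.\ $\Phi$ is affine. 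Every source surface here is Type~$\mathcal{A}$, hence strongly projectively flat by Lemma~\ref{L1.1}; each target ($\MM_0^6$, $\MM_4^4(0)$, $\MM_3^4(c)$, and the auxiliary $\tilde{\MM}_5^4(c)$) has $\dim\{\mathcal{Q}\}=3$ and so is strongly projectively flat by Theorem~\ref{T1.2}~(3). Thus in every case it suffices to verify the single identity $\Phi^*\mathcal{Q}(\mathrm{target})=\mathcal{Q}(\mathrm{source})$.

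Before the computation I must address a mismatch: Theorem~\ref{T1.2}~(5) is stated for diffeomorphisms, whereas most of the $\PP$ are embeddings or immersions onto proper subsets. I would dispose of this by working locally, which is legitimate since affineness is a local condition. A smooth map $\mathbb{R}^2\to\mathbb{R}^2$ whose differential is injective is automatically a local diffeomorphism; hence around each point $p$ I may choose a small convex neighborhood $U$ with $\Phi|_U\colon U\to V:=\Phi(U)$ a diffeomorphism onto a simply connected open set $V$. The three listed elements of $\mathcal{Q}(\mathrm{target})$ are real analytic and linearly independent, so their restrictions to $V$ remain independent and, since $\dim\{\mathcal{Q}(V)\}\le 3$ by Theorem~\ref{T1.2}~(2), they span $\mathcal{Q}(V)$; likewise the three listed elements of $\mathcal{Q}(\mathrm{source})$ span $\mathcal{Q}(U)$. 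Matching pullbacks on $U$ then yields $\Phi^*\mathcal{Q}(V)=\mathcal{Q}(U)$, and Theorem~\ref{T1.2}~(5) gives $\Phi^*\tilde\nabla=\nabla$ on $U$; as $p$ is arbitrary, $\Phi$ is affine.

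The heart of the proof is then the pullback matching, carried out directly from Definition~\ref{D1.3}. In each case one substitutes the coordinate expressions of $\Phi$ into the stated basis of $\mathcal{Q}(\mathrm{target})$ and reads off the basis of $\mathcal{Q}(\mathrm{source})$. For instance $\PP_1^6$ sends $\{\pone,x^1,x^2\}$ to $\{\pone,e^{x^1},x^2e^{x^1}\}=\mathcal{Q}(\MM_1^6)$; $\PP_2^4$ sends $\{e^{cx^2},e^{(1+c)x^2},x^1e^{cx^2}\}$ to $\{e^{cx^2},e^{(1+c)x^2},e^{cx^2-x^1}\}=\mathcal{Q}(\MM_2^4(c))$; and $\PP_5^4(c)$ sends $\{e^{cx^2}\cos x^2,e^{cx^2}\sin x^2,x^1\}$ to $\{e^{cx^2}\cos x^2,e^{cx^2}\sin x^2,e^{x^1}\}=\mathcal{Q}(\MM_5^4(c))$. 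The remaining six maps are identical in spirit and purely mechanical.

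Finally I would record the topological nature of each map, which is independent of the connections. Assertions~(4) and~(8) are isomorphisms because $\PP_4^6$ and $\PP_4^4(c)$ admit explicit global polynomial inverses; the embeddings are injective with continuous inverse onto their open images, as one checks by solving for $(x^1,x^2)$; and $\PP_5^6$ is a genuine immersion that fails to be injective only through the $2\pi$-periodicity of the angular variable $x^2$, consistent with its being the exponential-type map underlying polar coordinates. I expect the only real subtlety to be the localization step reconciling the global hypothesis of Theorem~\ref{T1.2}~(5) with the non-surjective maps; once the universal bound $\dim\{\mathcal{Q}\}\le 3$ is invoked to pin down the local solution spaces, the remainder is bookkeeping.
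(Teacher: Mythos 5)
Your proposal is correct and takes essentially the same route as the paper: the paper's proof likewise observes that all the geometries involved are strongly projectively flat because $\dim\{\mathcal{Q}(\cdot)\}=3$, notes that each map $\PP_i^j$ intertwines the solution spaces $\mathcal{Q}(\cdot)$, and concludes by Theorem~\ref{T1.2}. The only difference is one of rigor rather than of method: you explicitly carry out the localization (restricting to convex neighborhoods and using $\dim\{\mathcal{Q}\}\le 3$ plus real analyticity to identify the local solution spaces) needed to apply Theorem~\ref{T1.2}~(5), whose hypothesis is a global diffeomorphism, to the non-surjective embeddings and immersions; the paper's three-line proof leaves this point implicit.
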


\begin{proof} Because $\dim\{\mathcal{Q}(\cdot)\}=3$,
the geometries in question are all strongly projectively flat.
The affine maps in question intertwine the solution spaces $\mathcal{Q}(\cdot)$.
Thus Theorem~\ref{T3.1} follows from Theorem~\ref{T1.2}.
\end{proof}

\section{The proof of Theorem~\ref{T1.5}}\label{S4}
Let $\mathcal{M}$ be a Type~$\mathcal{A}$ surface model. We have $\dim\{\mathfrak{K}(\mathcal{M})\}\in\{2,4,6\}$.
If $\dim\{\mathfrak{K}(\mathcal{M})\}=2$,  then
$\mathfrak{K}(\mathcal{M})=\operatorname{Span}\{\partial_{x^1},\partial_{x^2}\}$. The flow lines
of the affine Killing vector fields are straight lines with a linear parametrization and are complete
so Theorem~\ref{T1.5} is immediate. If $\dim\{\mathfrak{K}(\mathcal{M})\}=6$, then $\mathcal{M}$
is flat. We apply Theorem~\ref{T3.1};
$\PP_i^6$ is a diffeomorphism if $i=4$ and thus $\MM_4^6$ is
affine Killing complete. $\PP_i^6$ is not surjective if  $i=1,2,3,5$ and thus $\MM_i^6$ is affine incomplete in
for these values of $i$. We complete the proof of Theorem~\ref{T1.5} by dealing with the case
$\dim\{\mathfrak{K}(\mathcal{M})\}=4$.
By Theorem~\ref{T1.4} and Theorem~\ref{T3.1} we may assume that $\mathcal{M}=\MM_3^4(c)$, that
$\mathcal{M}=\MM_4^4(0)$, or to replace $\mathcal{M}$ by $\tilde{\MM}_5^4(c)$. We examine these 3
cases seratim.
\subsection{Case 1. $\boldsymbol{\MM_3^4(c)}$} We have
$\mathcal{Q}(\MM_3^4(c))=\operatorname{Span}\{e^{cx^2},e^{(1+c)x^2},x^1e^{cx^2}\}$. This is
not a particularly convenient form of this surface to work with. We set $u^1:=x^1e^{cx^2}$ and $u^2:=x^2$
to express
$\mathcal{Q}(\MM_3^4(c))=\operatorname{Span}\{e^{cu^2},e^{(1+c)u^2},u^1\}$. We define
$$
T(a_1,b_1,c_1,c_2)(u^1,u^2)=(e^{a_1}u^1+b_1e^{cu^2}+c_1e^{(1+c)u^2},u^2+d_1)\,.
$$
Because $T(a_1,b_1,c_1,d_1)^*\mathcal{Q}(\MM_3^4(c))=\mathcal{Q}(\MM_3^4(c))$,
$T(a_1,b_1,c_1,d_1)$ defines a diffeomorphism of $\mathbb{R}^2$ preserving the affine structure.
We verify
\begin{eqnarray*}
&&
T(a_1,b_1,c_1,d_1)\circ T(a_2,b_2,c_2,d_2)\\
&&\quad=T(a_1+a_2,b_2e^{a_1}+b_1e^{cd_2},c_2e^{a_1}+c_1e^{(1+c)d_2},d_1+d_2){\,,}\\
&&T(a_1,b_1,c_1,d_1)^{-1}=T(-a_1,-b_1e^{-a_1-cd_1},-c_1e^{-a_1+(-1-c)d_1},-d_1)\,.
\end{eqnarray*}
This gives $\mathbb{R}^4$ the structure of a Lie group and constructs a 4-parameter family of affine Killing vector fields
which for dimensional reasons must be $\mathfrak{K}(\MM_3^4(c))$ and thereby shows $\MM_3^4(c)$
is affine Killing complete.

\subsection{Case 2. $\boldsymbol{\MM_4^4(0)}$} We have
$\mathcal{Q}(\MM_4^4(0))=\operatorname{Span}\{e^{x^2},x^2e^{x^2},x^1e^{x^2}\}$. We clear the previous notation and set
$$
T(a_1,b_1,c_1,d_1)(x^1,x^2):=(e^{a_1}x^1+b_1x^2+c_1,x^2+d_1)\,.
$$
Since $T(a_1,b_1,c_1,d_1)^*\mathcal{Q}(\MM_4^4(0))=\mathcal{Q}(\MM_4^4(0))$,
$T(a_1,b_1,c_1,d_1)$ is a diffeomorphism of $\mathbb{R}^2$ preserving the affine structure.
The group structure on $\mathbb{R}^4$ is given by
\begin{eqnarray*}
&&T(a_1,b_1,c_1,d_1)\circ T(a_2,b_2,c_2,d_2)\\
&&\quad=T(a_1+a_2,b_1+b_2e^{a_1},c_1+b_1d_2+c_2e^{a_1},d_1+d_2),\\
&&T(a_1,b_1,c_1,d_1)^{-1}=T(-a_1,-b_1e^{-a_1},e^{-a_1}(b_1d_1-c_1),-d_1)\,.
\end{eqnarray*}
It now follows $\MM_4^4(0)$ is affine Killing complete.

\subsection{Case 3. $\boldsymbol{\tilde{\MM}_5^4(c)}$}\label{S3.3}
We have
$\mathcal{Q}(\tilde{\MM}_5^4(c))=\operatorname{Span}\{e^{cx^2}\cos(x^2),e^{cx^2}\sin(x^2),x^1\}$.
We clear the previous notation and set
$$
T(a_1,b_1,c_1,d_1)(x^1,x^2):=(e^{a_1}x^1+b_1e^{cx^2}\cos(x^2)+c_1e^{cx^2}\sin(x^2),x^2+d_1)\,.
$$
Then $T(a_1,b_1,c_1,d_1)^*\mathcal{Q}(\tilde{\MM}_5^4(c))=\mathcal{Q}(\tilde{\MM}_5^4(c))$ so
$T(a_1,b_1,c_1,d_1)$ is a diffeomorphism of $\mathbb{R}^2$ preserving the affine structure. The group
structure is given by
\begin{eqnarray*}
&&T(a_1,b_1,c_1,d_1)\circ T(a_2,b_2,c_2,d_2)\\
&&\quad= T(a_1+a_2,e^{a_1} b_2+b_1 e^{c d_2} \cos (d_2)+c_1 e^{c d_2} \sin (d_2),\\
&&\qquad e^{a_1} c_2-b_1 e^{c d_2} \sin (d_2)+c_1 e^{c d_2} \cos (d_2),d_1+d_2),\\
&&T(a_1,b_1,c_1,d_1)^{-1}=T(-a_1,-e^{-a_1-c d_1} (b_1 \cos (d_1)-c_1 \sin (d_1)),\\
&&\qquad\qquad\qquad\qquad\qquad -e^{-a_1-c d_1} (b_1 \sin (d_1)+c_1 \cos (d_1)),-d_1)\,.
\end{eqnarray*}
It now follows $\tilde{\MM}_5^4(c)$ is affine Killing complete. It is immediate that $\operatorname{Aff}(\tilde{\MM}_5^4(c))$ acts
transitively on $\mathbb{R}^2$ so this is a homogeneous geometry.

\section{The proof of Theorem~\ref{T1.6}}\label{S5}
Let $\mathcal{M}=(\mathbb{R}^2,\nabla)$ be a Type~$\mathcal{A}$ structure on $\mathbb{R}^2$.
By Lemma~\ref{L1.1}, there exists a linear function $\varphi$ with $e^\varphi\in\mathcal{Q}(\mathcal{M})$ and so
$\tilde{\mathcal{M}}:={}^{-\varphi}\mathcal{M}$ is flat.

Since $e^{\varphi}\in\mathcal{Q}(\mathcal{M})$ and $\dim\{\mathcal{Q}(\mathcal{M})\}=3$, we have
$\mathcal{Q}(\mathcal{M})=e^{\varphi}\operatorname{Span}\{\pone,\phi_1,\phi_2\}$.
Set $\Xi_P(\phi):=\{\phi,\partial_{x^1}\phi,\partial_{x^2}\phi\}(P)$ for $P\in\mathbb{R}^2$.
By Theorem~\ref{T1.2}, $\Xi_P$ is an injective map from $\mathcal{Q}(\mathcal{M})$ to $\mathbb{R}^3$.
Since $\dim\{\mathcal{Q}(\mathcal{M})\}=3$, $\Xi_P$ is bijective.
It now follows that $d\phi_1(P)$ and $d\phi_2(P)$ are linearly independent so $\Phi:=(\phi_1,\phi_2)$ is an immersion.

By Theorem~\ref{T1.2}, $\mathcal{Q}(\tilde{\mathcal{M}})=
\operatorname{Span}\{\pone,\phi_1,\phi_2\}=\Phi^*\operatorname{Span}\{\pone,x^1,x^2\}=\Phi^*\mathcal{Q}(\MM_0^6)$. Consequently,
$\tilde{\mathcal{M}}=\Phi^*\MM_0^6$ by Theorem~\ref{T1.2}.
The affine geodesics in $\MM_0^6$ are straight lines and can be written in the form $t u+v$ for $u$ and $v$ in $\mathbb{R}^2$.
Thus the affine geodesics in $\tilde{\mathcal{M}}$ locally take the form $\Phi^{-1}(tu+v)$. Since $\mathcal{M}$ and $\tilde{\mathcal{M}}$
are strongly projectively equivalent, the unparameterized geodesics of $\mathcal{M}$ and $\tilde{\mathcal{M}}$ agree. The desired result now
follows.\qed

\section{The proof of Theorem~\ref{T1.7}}\label{S6}

We divide the proof of Theorem~\ref{T1.7} into 3 cases depending on $\operatorname{Rank}\{\rho\}$ or, equivalently, on
$\dim\{\mathfrak{K}\}$; each is then divided further
depending on the particular family involved.
We use the ansatz of Theorem~\ref{T1.6}.  Let $\sigma_{a,b}(t)$ be the
affine geodesic with $\sigma_{a,b}(0)=0$ and $\dot\sigma_{a,b}(0)=(a,b)$.
\subsection{Case 1. The flat geometries $\boldsymbol{\MM_i^6}$}
These geometries all are locally affine equivalent to the affine plane $(\mathbb{R}^2,\Gamma_0^6)$;
this geodesically complete affine surface provides a local model for each of these geometries.
$\PP_i^6$ for $i=1,2,3$ embeds $\MM_i^6$ in $\MM_0^6$, $\PP_4^6$ provides
a diffeomorphism between $\MM_4^6$ and $\MM_0^6$, and $\PP_5^6$ immerses
$\MM_5^6$ in $\MM_0^6$. Thus $\MM_i^6$ is geodesically incomplete for
$i=1,2,3,5$ and $\MM_4^6$ is geodesically complete.
\subsubsection{$\boldsymbol{\MM_0^6}$}
$\sigma_{a,b}(t)=(at,bt)$. $\MM_0^6$ is geodesically complete and
defines the flat affine plane $\mathbb{A}^2$.

\subsubsection{$\boldsymbol{\MM_1^6}$ }
$\sigma_{a,b}(t)=(\log(1+at),\frac{bt}{1+at})$. $\MM_1^6$ is geodesically
incomplete; $\sigma_{a,b}(t)$ is defined for all $t\in\mathbb{R}$ if and only if $a=0$.

\subsubsection{$\boldsymbol{\MM_2^6}$}
$\sigma_{a,b}(t)=(-\log(1-at),\log(1+bt))$.
$\MM_2^6$ is geodesically incomplete; no non-trivial geodesic is defined for all $t\in\mathbb{R}$.

\subsubsection{$\boldsymbol{\MM_3^6}$}
$\sigma_{a,b}(t)=(at,\log(1+bt))$. $\sigma_{a,b}(t)$ is defined for all $t\in\mathbb{R}$ if and only if $b=0$.

\subsubsection{$\boldsymbol{\MM_4^6}$} $\sigma_{a,b}(t)=(at-\frac12b^2t^2,bt)$. $\MM_4^6$ is geodesically complete.

\subsubsection{$\boldsymbol{\MM_5^6}$}\label{S6.1.6}
$\sigma_{a,b}(t)=\left(\frac12\log((1+at)^2+b^2t^2),\arctan\left(\frac{tb}{1+at}\right)\right)$.
$\sigma_{a,b}(t)$ extends to be defined for all $t\in\mathbb{R}$
if and only if $b\ne0$.

\subsection{Case 2: The geometries $\boldsymbol{\MM_i^4(\cdot)}$}
For these geometries, the Ricci tensor is a non-zero constant multiple $\lambda$
of $dx^2\otimes dx^2$. Suppose there exists a geodesically complete affine
surface $\tilde{\mathcal{M}}$ which is modeled on $\MM_i^4(\cdot)$.
Let $\sigma$ be a small piece of a geodesic in $\MM_i^4(\cdot)$
defined by $\sigma(t)=(x^1(t),x^2(t))$ which can be copied into $\tilde{\mathcal{M}}$. Then $\rho(\dot\sigma,\dot\sigma)(t)=\lambda(\dot x^2(t))^2$
extends to a real analytic function on $\tilde{\mathcal{M}}$
which is defined for all $t$. If we can exhibit a geodesic where $\dot x^2(t)$ is not bounded,
it then follows that $\MM_i^4(\cdot)$ is essentially geodesically incomplete.

\subsubsection{$\boldsymbol{\MM_1^4}$} This geometry is essentially geodesically incomplete because
$$\sigma_{a,b}(t)=\left\{\begin{array}{ll}
(-\log (1-\frac{a \log (2 b t+1)}{2 b}),\frac{1}{2} \log (2 b t+1))&\text{ if }b\ne0\\
(-\log(1-at),0)&\text{ if }b=0\end{array}\right\}\,.$$

\subsubsection{$\boldsymbol{\MM_2^4(c)}$}
If $c\ne-\frac12$, then $\sigma_{a,b}(t)$ is given by:
$$
\left\{\begin{array}{ll}
(\log (\frac{b}{a+b})-\log (1-\frac{a (2 b c t+b t+1)^{\frac{1}{2 c+1}}}{a+b}),
\frac{\log (2 b c t+b t+1)}{2 c+1})&\text{if }b\ne0,b\ne-a\\
\frac{(-\log(1+b(t+2ct)),\log(1+bt+2bct))}{1+2c}&\text{if }b\ne0,b=-a\\
(-\log(1-at),0)&\text{if }b=0
\end{array}\right\}.$$
This geometry is essentially geodesically incomplete. If $c=-\frac12$, then
$$
\sigma_{a,b}(t)=\left\{\begin{array}{ll}
(-\log(1-at),0)&\text{if }b=0\\
(-\log(a(e^{bt}-1)-b)+\log(-b),bt)&\text{if }b<0\\
(-\log(-a(e^{bt}-1)+b)+\log(b),bt)&\text{if }b>0
\end{array}\right\}\,.
$$
This geometry is geodesically incomplete. By Theorem~\ref{T3.1}, there is an affine
embedding of $\MM_2^4(-\frac12)$ in $\MM_3^4(-\frac12)$. Since
we shall show presently that $\Gamma_3^1(-\frac12)$ is geodesically
complete, the geometry $\MM_2^4(-\frac12)$ can be geodesically completed.

\subsubsection{$\boldsymbol{\MM_3^4(c)}$}
If $c\ne-\frac12$, let $\kappa:={1+2c}$. This geometry is essentially geodesically incomplete since
$$
\sigma_{a,b}(t)=\left\{\begin{array}{ll}
{\left(\frac ab((1+b t\kappa)^{{1}/{\kappa}}-1),\kappa^{-1}\log(1+b t\kappa)\right)}&\text{ if }b\ne0\\
(at,0)&\text{ if }b=0
\end{array}\right\}\,.
$$
 If $c=-\frac12$, then this geometry is geodesically complete since
$$
\sigma_{a,b}(t)=\left\{\begin{array}{ll}
(\frac ab(e^{bt}-1),bt)&\text{ if }b\ne0\\
(at,0)&\text{ if }b=0
\end{array}\right\}\,.
$$

\subsubsection{$\boldsymbol{\MM_4^4(c)}$} This geometry is essentially geodesically incomplete
since
$$\textstyle\sigma_{a,b}(t)=\left\{\begin{array}{ll}
(at,0)&\text{ if }b=0\\
(-\frac1{8b}\log(1+2bt)(-4a+bc\log(1+2bt)),\frac12\log(1+2bt))&\text{ if }b\ne0\end{array}\right\}\,.
$$

\subsubsection{$\boldsymbol{\MM_5^4(c)}$} This has an affine embedding in $\tilde{\MM}_5^4(c)$
which is not surjective and hence $\MM_5^4(c)$ is geodesically incomplete.

\subsubsection{$\boldsymbol{\tilde{\MM}_5^4(c)}$} We have $\rho=(1+c^2)dx^2\otimes dx^2$.
If $c=0$, then
$$
\sigma_{a,b}(t)=\left\{\begin{array}{ll}(at,0)&\text{ if }b=0\\
(\frac ab\sin(bt),bt)&\text{ if }b\ne0\end{array}\right\}\,.
$$
This geometry is geodesically complete. If $c\ne0$, then this geometry, and hence the geometry $\MM_5^4(c)$,
is esentially geodesically incomplete because
$$
\sigma_{a,b}(t)=\left\{\begin{array}{ll}
(at,0)&\text{ if }b=0\\
\left(\frac ab(1+2bct)^{1/2}\sin(\frac{\log(1+2bct)}{2c}),\frac{\log(1+2bct)}{2c}\right)&\text{ if }b\ne0\end{array}\right\}\,.
$$
  If $b\ne0$, then $\dot x^2$ does not remain bounded for all $t$. Thus all geodesics but one can not be completed. Since $\MM_5^4(c)$
embeds as an open subset of $\tilde{\MM}_5^4(c)$, this shows $\MM_5^4(c)$ also is essentially geodesically incomplete for $c\ne0$.

\subsection{Case 3. The geometries $\boldsymbol{\MM_i^2(\cdot)}$}
Suppose that $\tilde{\mathcal{M}}$ is a simply connected complete affine surface which is locally modeled on
$\MM_i^2(\cdot)$. Since
$\dim\{\mathfrak{K}(\tilde{\mathcal{M}})\}=2$, $\partial_{x^1}$ and
$\partial_{x^2}$ extend as Killing vector fields to all of $\mathcal{M}$. This shows that if $\gamma$ is a geodesic in
$\MM_i^2(\cdot)$, then $\rho(\dot\gamma,\partial_{x^i})$ is a bounded function on $\gamma$. We use this criteria
in what follows. In all cases, attempting to find the most general geodesic resulted in an ODE that we could
not solve explicitly.

\subsubsection{$\boldsymbol{\MM_1^2(a_1,a_2)}$}
We obtain 3 possible geodesics $\sigma_i(t)=\log(t)\vec\alpha_i$ where
$$
\vec\alpha_1=\frac{(1,1)}{1+a_1+a_2},\quad
\vec\alpha_2=\frac{(1-a_2,a_1)}{1+a_1-a_2},\quad
\vec\alpha_3=\frac{(a_2,1-a_1)}{1-a_1+a_2}.
$$
The first geodesic is defined for $a_1+a_2+1\ne0$, the second for $a_1-a_2+1\ne0$, and the third for $-a_1+a_2+1\ne0$. At least
two geodesics are defined for any given geometry. We have $\dot\sigma=\frac1t(c,d)$ for $(0,0)\ne(c,d)\in\mathbb{R}^2$.
Thus this geometry is essentially geodesically incomplete.
\subsubsection{$\boldsymbol{\MM_2^2(a_1,a_2)}$} Suppose $a_1\ne-1$.
We have a geodesic $\sigma(t)=\log(t)(\frac1{1+a_1},0)$. We conclude the geometry is essentially geodesically incomplete.
Suppose $a_1=-1$. We adapt an argument of Bromberg and Medina~\cite{B05}.
The geodesic equations become $\dot u=v(2a\dot u-\textstyle\frac12(1+a^2)v)$ and $\dot v=v(2u)$
or in matrix form:
$$
A\left(\begin{array}{c}u\\v\end{array}\right)
=v\left(\begin{array}{c}\dot u\\ \dot v\end{array}\right)\text{ for }
A:=\left(\begin{array}{cc}-2a&\frac12(1+a^2)\\-2&0\end{array}\right)\,.
$$
If $v(t_0)=0$ for any point in the parameter range, then $u(t)=u(t_0)$ and $v(t)=0$ solve this ODE.
Thus we may suppose without loss of generality $v$ does not change sign.
Introduce a new parameter $\tau$ so $\partial_\tau t=v(t)$ and let $U(\tau)=u(t(\tau))$ and $V(\tau)=v(t(\tau))$. We have
\begin{equation}\label{E6.a}
\partial_\tau\left(\begin{array}{c}U\\V\end{array}\right)=A\left(\begin{array}{c}U\\V\end{array}\right)\,.
\end{equation}
The eigenvalues of $A$ are $-a\pm\sqrt{-1}$. We solve Equation~(\ref{E6.a}) to see
$$
\left(\begin{array}{cc}U\\V\end{array}\right)=e^{-\tau a}\left\{\cos(\tau)\left(\begin{array}{cc}c_1\\c_2\end{array}\right)
+\sin(\tau)\left(\begin{array}{cc}-ac_1+\frac12(1+a^2)c_2\\-2c_1+ac_2\end{array}\right)\right\}\,.
$$
Thus $V=e^{-\tau a}(c_2\cos(\tau)+(-2c_1+ac_2)\sin(\tau))$. Since $V$ never vanishes, $\tau$ is restricted to
a parameter range of length at most $\pi$. It now follows that the original geodesic is for all $t\in\mathbb{R}$.

\subsubsection{$\boldsymbol{\MM_3^2(c)}$ \bf and $\boldsymbol{\MM_4^2(\pm)}$} We have
$\sigma_{a,0}=(\frac12\log(1+2at),0)$
so these geometries are essentially geodesically incomplete.

\section{The classification of flat Type~$\mathcal{B}$ geometries}\label{S7}
This section is devoted to the proof of Theorem~\ref{T1.6}~(4); by Theorem~\ref{T1.2}, it suffices to
classify the relevant solution spaces of the quasi-Einstein equation. Let $\mathcal{Q}=\mathcal{Q}(\mathcal{N})$
where $\mathcal{N}$ is a flat Type~$\mathcal{B}$ structure on $\mathbb{R}^+\times\mathbb{R}$. We work
modulo the action of the shear group $(x^1,x^2)\rightarrow(x^1,ax^1+bx^2)$ for $b\ne0$. Let
$\Phi=(\phi^1,\phi^2)$ be a local affine map from $\mathcal{N}$ to $\mathcal{M}_0^6$. We then have
$$
\mathcal{Q}=\Phi^*\operatorname{Span}\{\pone,x^1,x^2\}=\operatorname{Span}\{\pone,\phi^1,\phi^2\}\,.
$$
Since $\Phi$ is a local diffeomorphism, $\partial_{x^1}\mathcal{Q}\ne\{0\}$ and $\partial_{x^2}\mathcal{Q}\ne\{0\}$.
This rules out certain possibilities.

The vector fields $X:=x^1\partial_{x^1}+x^2\partial_{x^2}$ and $Y:=\partial_{x^2}$ are
Killing vector fields and therefore preserve $\mathcal{Q}$; the action of the Lie algebra $\operatorname{Span}\{X,Y\}$ on
$\mathcal{Q}$ is crucial. We complexify and
set $\mathcal{Q}_{\mathbb{C}}:=\mathcal{Q}\otimes_{\mathbb{R}}\mathbb{C}$;
elements of $\mathcal{Q}$ may be obtained by taking the
real and imaginary parts of complex solutions.
Decompose $\mathcal{Q}_{\mathbb{C}}=\oplus_\lambda\mathcal{Q}_\lambda$ as the direct sum of the generalized
eigenspaces of $X$ where
$$
\mathcal{Q}_\lambda:=\{f\in\mathcal{Q}_{\mathbb{C}}:(X-\lambda)^3f=0\}\,.
$$
The commutation relation $[X,\partial_{x^2}]=-\partial_{x^2}$ implies that
$$
\partial_{x^2}\mathcal{Q}_\lambda\subset\mathcal{Q}_{\lambda-1}\,.
$$

Choose $\lambda$ and $f\in\mathcal{Q}_\lambda$ so $\partial_{x^2}f\ne0$.
This implies $\mathcal{Q}_{\lambda-1}\ne0$. Thus, for dimensional reasons, $\dim\{\mathcal{Q}_\mu\}\le2$ for all $\mu$ and consequently
$$
\mathcal{Q}_\mu=\{f\in\mathcal{Q}_{\mathbb{C}}:(X-\mu)^2f=0\}\,.
$$
Since $\dim\{\mathcal{Q}\}=3$, $\{\mathcal{Q}_\lambda,\mathcal{Q}_{\lambda-1},\mathcal{Q}_{\lambda-2},\mathcal{Q}_{\lambda-3}\}$
can not all be non-trivial and thus, in particular, $(\partial_{x^2})^3f=0$ for any $f\in\mathcal{Q}_\lambda$. This implies any element of $\mathcal{Q}$
is a polynomial of degree at most 2 in $x^2$ with coefficients which are smooth functions of $x^1$.
If $(X-\lambda)f=0$,
then $f$ is a sum of elements of the form $(x^1)^{\lambda-k}(x^2)^k$ for $k\le 2$. If $(X-\lambda)^2f=0$, then $f$ is a sum of
elements of the form $(x^1)^{\lambda-k}(x^2)^k$ and $(x^1)^{\lambda-k}(x^2)^k\log(x^1)$ for $k\le2$. Since
$\dim\{\mathcal{Q}_\lambda\}\le2$, this is
the most complicated Jordan normal form possible.
In principle, the parameter $\lambda$ could be complex. It will follow from our subsequent analysis that this is not the case.
We adopt the
notation of Definition~\ref{D1.8}.

\subsection*{Case 1}
Suppose first that there exists $f\in\mathcal{Q}$ which has degree at least 2 in $x^2$. Let $f\in\mathcal{Q}_\lambda$
satisfy $\partial_{x^2}^2f\ne0$. Then $\{f,\partial_{x^2}f,\partial_{x^2}^2f\}$ is a basis for $\mathcal{Q}$. This implies $\partial_{x^2}^2f=c\pone$
so $\lambda=2$. Since $f\in\mathcal{Q}_2$, $\partial_{x^2} f\in\mathcal{Q}_1$, and $\pone\in\mathcal{Q}_0$,
$\dim\{\mathcal{Q}_\mu\}\le1$ for all $\mu$ and there are no log terms. Thus
$f=(x^2)^2+ax^1x^2+b(x^1)^2$. We may replace $x^2$ by $\tilde x^2=x^2+\frac12ax^1$ to ensure $a=0$. Since $\mathcal{Q}=\operatorname{Span}\{f,2x^2,\pone\}$ and since
$\partial_{x^1}\{\mathcal{Q}\}\ne0$, $b\ne0$. Rescale $x^2$ and renormalize $f$ to assume that $f=(x^2)^2\pm(x^1)^2$ and obtain
$\mathcal{N}_1^6(\pm)$.

\medbreak We assume henceforth that every element of $\mathcal{Q}$ is at most linear in $x^2$.
Since $\partial_{x^2}\{\mathcal{Q}\}\ne\{0\}$, we can choose $\lambda$ so that
$f=a_0(x^1)x^2+a_1(x^1)\in\mathcal{Q}_\lambda$ for $a_0(x^1)\ne0$. This gives rise to the following possibilities.

\subsection*{Case 2} Suppose $\lambda\notin\{0,1\}$. Then $\mathcal{Q}_\lambda$,
$\mathcal{Q}_{\lambda-1}$, and $\mathcal{Q}_0$ are non-trivial and distinct; hence each is 1-dimensional and
$\mathcal{Q}=\mathcal{Q}_\lambda\oplus\mathcal{Q}_{\lambda-1}\oplus\mathcal{Q}_0$. If $\lambda$ is complex,
then $\mathcal{Q}_{\bar\lambda}$ is non-trivial and is not contained in $\mathcal{Q}_\lambda\oplus\mathcal{Q}_{\lambda-1}\oplus\mathcal{Q}_0$
which is impossible. Thus $\lambda$ is real,
as noted above. Since $\dim\{\mathcal{Q}_\lambda\}=1$, there are no $\log(x^1)$ terms and $f=(x^1)^{\lambda-1}x^2+(x^1)^{\lambda}c$.
Replacing $x^2$ by $x^2-cx^1$ then permits us to assume $f=(x^1)^{\lambda-1}x^2$ so
$\mathcal{Q}=\operatorname{Span}\{\pone,(x^1)^{\lambda-1},(x^1)^{\lambda-1}x^2\}$ for $\lambda\ne0,1$.
This is $\mathcal{N}_2^6(c)$ for $c=\lambda-1\notin\{-1,0\}$.
We will deal with $\mathcal{N}_2^6(-1)$ subsequently.

\subsection*{Case 3} Suppose $\lambda=0$ so that $f=a_0(x^1)x^2+a_1(x^1)\in\mathcal{Q}_0$.
We then have $a_0(x^1)=\partial_{x^2}f\in\mathcal{Q}_{-1}$.
We also have $\pone\in\mathcal{Q}_0$.
Thus $\mathcal{Q}_{-1}$ is 1-dimensional so, after rescaling, we may take $a_0(x^1)=(x^1)^{-1}$ and
consequently $f=\frac{x^2}{x^1}+\varepsilon\log(x^1)$.
If $\varepsilon=0$, we obtain $\mathcal{N}_2^6(-1)$. If $\varepsilon\ne0$, we can rescale to obtain $\mathcal{N}_3^6$.

\subsection*{Case 4} Suppose $\lambda=1$ so $f=a_0(x^1)x^2+a_1(x^1)\in\mathcal{Q}_1$. Express
$$
f=x^2+x^2\alpha\log(x^1)+\beta x^1+\gamma x^1\log(x^1)\,.
$$
If $\alpha\ne0$, then $X$ has non-trivial Jordan normal form on $\mathcal{Q}_1$ so $\dim\{\mathcal{Q}_1\}\ge2$.
Furthermore $\partial_{x^2}f=\alpha\log(x^1)\in\mathcal{Q}_0$; since $\pone\in\mathcal{Q}_0$,
$\dim\{\mathcal{Q}_0\}\ge2$.  This is false.
Thus $\alpha=0$. By replacing $x^2$ by $x^2+\beta x^1$, we may assume
$\beta=0$ and obtain
$f=x^2+\gamma x^1\log(x^1)$. If $\gamma\ne0$, then applying $(X-1)$ we see $x^1\in\mathcal{Q}_1$; this gives, after rescaling,
$\mathcal{N}_4^6$. Thus we may assume
$\gamma=0$ so $x^2\in\mathcal{Q}_1$. If $\dim\{\mathcal{Q}_1\}=2$, we obtain $\mathcal{N}_0^6$.
If $\dim\{\mathcal{Q}_0\}=2$, then $\log(x^1)\in\mathcal{Q}_0$ and we obtain $\mathcal{N}_5^6$.
Otherwise, we obtain $\mathcal{N}_6^6(c)$ for $c\ne0,-1$. This completes the classification of the
flat Type~$\mathcal{B}$ structures.\qed

\section{Affine embeddings and immersions of Type~$\mathcal{B}$ structures}
Define
$$
\begin{array}{lll}
\Psi_0^6(x^1,x^2)=(x^1,x^2),&\Psi_1^6(\pm1)(x^1,x^2)=(x^2,(x^1)^2\pm(x^2)^2),\\
\Psi_2^6(c)(x^1,x^2)=((x^1)^c,(x^1)^cx^2),&\Psi_3^6(x^1,x^2)=(\frac1{x^1},\frac{x^2}{x^1}+\log(x^1)),\\
\Psi_4^6(x^1,x^2)=(x^1,x^2+x^1\log(x^1)),&\Psi_5^6(x^1,x^2)=(\log(x^1),x^2),\\
\Psi_6^6(c)(x^1,x^2)=((x^1)^{1+c},x^2),&\Psi_1^4(x^1,x^2)=(x^2+x^1\log(x^1),\log(x^1)),\\
\Psi_2^4(\kappa,\theta)(x^1,x^2)=(x^2,\theta\log(x^1)),&\Psi_3^4(\kappa)(x^1,x^2)=(x^2,\kappa\log(x^1)).
\end{array}$$

\begin{theorem}\label{T8.1}
\ \begin{enumerate}
\item $\Psi_i^6(\cdot)$ is an affine embedding of $\mathcal{N}_i^6({\cdot})$ in $\MM_0^6$ for any $i$.
\item $\Psi_1^4$ is an affine isomorphism from $\mathcal{N}_1^4(\kappa)$ to $\MM_3^4(\kappa)$.
\item $\Psi_2^4(\kappa,\theta)$ is an affine isomorphism from $\mathcal{N}_2^4(\kappa,\theta)$ to
$\MM_3^4(\frac\kappa\theta)$.
\item $\Psi_3^4(\kappa)$ is an affine isomorphism from $\mathcal{N}_3^4(\kappa)$ to $\MM_4^4(0)$.
\end{enumerate}
\end{theorem}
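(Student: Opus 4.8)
The plan is to proceed exactly as in the proof of Theorem~\ref{T3.1}, exploiting the fact that $\mathcal{Q}$ is a complete invariant of strongly projectively flat surfaces. First I would observe that every geometry appearing in the statement---each $\mathcal{N}_i^6(\cdot)$ and $\mathcal{N}_i^4(\cdot)$ on the source side, and $\MM_0^6$, $\MM_3^4(\cdot)$, $\MM_4^4(0)$ on the target side---has a three-dimensional solution space of the quasi-Einstein equation, as recorded in Definitions~\ref{D1.3} and~\ref{D1.8}. By Theorem~\ref{T1.2}~(3) all of these surfaces are therefore strongly projectively flat, which is precisely the hypothesis needed to invoke Theorem~\ref{T1.2}~(5).

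The computational core is to verify, for each map $\Psi$, that $\Psi^*$ carries the solution space of the target onto the solution space of the source. This is a direct substitution: one plugs the components of $\Psi$ into the listed basis of $\mathcal{Q}$ of the target and checks that the resulting functions span exactly the listed basis of $\mathcal{Q}$ of the source. For instance, writing $(y^1,y^2)$ for the target coordinates, the pullback under $\Psi_1^4(x^1,x^2)=(x^2+x^1\log x^1,\log x^1)$ of the basis $\{e^{\kappa y^2},e^{(1+\kappa)y^2},y^1e^{\kappa y^2}\}$ of $\mathcal{Q}(\MM_3^4(\kappa))$ is $\{(x^1)^\kappa,(x^1)^{\kappa+1},(x^1)^\kappa(x^2+x^1\log x^1)\}$, which is precisely the basis of $\mathcal{Q}(\mathcal{N}_1^4(\kappa))$. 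The remaining maps are handled identically, the only minor point being that one rescales by the nonzero constants $\kappa$ or $\theta$ when a logarithm is produced (as in $\Psi_2^4$ and $\Psi_3^4$, where $y^2=\theta\log x^1$ or $\kappa\log x^1$).

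With the intertwining of solution spaces established, Theorem~\ref{T1.2}~(5) immediately gives that each $\Psi$ is an affine map, i.e. $\Psi^*\tilde\nabla=\nabla$. For the isomorphism statements~(2)--(4) this finishes the argument once I check from the explicit formulas that $\Psi$ is a diffeomorphism of $\mathbb{R}^+\times\mathbb{R}$ onto $\mathbb{R}^2$; in each case one coordinate is $x^2+x^1\log x^1$, $\theta\log x^1$, or $\kappa\log x^1$ together with $x^2$, and since $\log$ maps $\mathbb{R}^+$ bijectively onto $\mathbb{R}$ while the remaining coordinate is an $x^2$-shear, invertibility is transparent. For the embedding statements in~(1), I would check that each $\Psi_i^6$ is a diffeomorphism of $\mathbb{R}^+\times\mathbb{R}$ onto an open subset of $\mathbb{R}^2$---typically a half-plane, the region $\{y^2>\pm(y^1)^2\}$ for $\Psi_1^6(\pm1)$, or all of $\mathbb{R}^2$ for $\Psi_5^6$---and then apply Theorem~\ref{T1.2}~(5) to this diffeomorphism onto its image, regarded as an open (hence simply connected and strongly projectively flat) affine submanifold of $\MM_0^6$, to conclude that $\Psi_i^6$ is an affine embedding.

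There is no serious obstacle here: once the classification-by-$\mathcal{Q}$ machinery of Theorem~\ref{T1.2} is in hand, the entire content is the routine verification of the pullback computations together with the elementary determination of the topological type of each map. The only point requiring a little care is the bookkeeping for the non-surjective maps in part~(1), where Theorem~\ref{T1.2}~(5) must be applied to the restriction of the target to the open image rather than to all of $\MM_0^6$.
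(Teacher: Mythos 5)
Your proposal is correct and takes essentially the same route as the paper: the paper's proof of Theorem~\ref{T8.1} (mirroring that of Theorem~\ref{T3.1}) simply notes that all the geometries are strongly projectively flat and that the maps intertwine the solution spaces $\mathcal{Q}(\cdot)$, then invokes Theorem~\ref{T1.2}. Your extra care in applying Theorem~\ref{T1.2}~(5) to the simply connected open image for the non-surjective maps in part~(1) is a detail the paper leaves implicit.
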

\begin{proof}
These geometries are all
strongly projectively flat and the diffeomorphisms in question intertwine the solution spaces $\mathcal{Q}(\cdot)$.
Thus Theorem~\ref{T8.1} follows from Theorem~\ref{T1.2}.
\end{proof}

\section{The proof of Theorem~\ref{T1.10}}\label{S8}

This section is devoted to the proof of Theorem~\ref{T1.10}. We apply Theorem~\ref{T8.1}.
Let $\mathcal{N}$ be a Type~$\mathcal{B}$ structure on $\mathbb{R}^+\times\mathbb{R}$. We distinguish cases.
\subsection*{Case 1} If $\dim\{\mathfrak{K}\}=6$, then
$\mathcal{N}$ is linearly equivalent to $\mathcal{N}_i^6$. The map $\Psi_i^6$ is an affine embedding of $\mathcal{N}_i^6$ in
$\mathbb{R}^2$ with the flat structure. If $i\ne5$, the embedding is not surjective and $\mathcal{N}_i^6$ is
affine Killing incomplete;
if $i=5$, then $\Psi_i^6$ is an isomorphism so $\mathcal{N}_5^6$ is affine complete.

\subsection*{Case 2} If $\dim\{\mathfrak{K}(\mathcal{N})\}=4$, then $\mathcal{N}$ is linearly equivalent to
$\mathcal{N}_i^4(\cdot)$. The maps $\Psi_i^4$ are affine
isomorphisms of $\mathcal{N}_i^4(\cdot)$ with $\MM_3^4(\cdot)$ or $\MM_4^4(0)$; these are affine
Killing complete by Theorem~\ref{T1.5}.
\subsection*{Case 3} $\dim\{\mathfrak{K}(\mathcal{N})\}=3$. There exists $\sigma\in\{0,\pm1\}$ so that
$$
\mathfrak{K}(\mathcal{M})=\operatorname{Span}\{X:=2x^1x^2\partial_{x^1}+((x^2)^2+\sigma(x^1)^2)\partial_2,
x^1\partial_{x^1}+x^2\partial_{x^2},\partial_{x^2}\}\,.s
$$
\subsection*{Case 3a} $\mathcal{N}_1^3(\pm)$ or $\mathcal{N}_2^3(c)$. We have
$\sigma=0$. The curve $\xi(t)=(x^1(t),x^2(t))$ is a flow curve for $X$ means that
$\dot x^1(t)=2x^1(t)x^2(t)$ and $\dot x^2(t)=x^2(t)^2$. We take $\xi(t)=(t^{-2},-t^{-1})$ to solve these equations
and to see these structures are affine Killing incomplete.

\subsection*{Case 3b} $\mathcal{N}_3^3$. We have $\sigma=1$. The curve $\xi(t)=(x^1(t),x^2(t))$
is a flow curve for $X$ means that
$\dot x^1=2x^1x^2$ and $\dot x^2=(x^2)^2+(x^1)^2$. We solve these equations by taking
$x^1(t)=-\frac12t^{-1}$ and $x^2(t)=-\frac12t^{-1}$.
Consequently, this structure is affine Killing incomplete.
This structure is the Lorentzian-hyperbolic plane; it
isometrically embeds in the pseudo-sphere which is affine complete.
We refer to \cite{APG18} for a further discussion of these two geometries and to \cite{G17} for a discussion of the pseudo-group
of isometries.
\subsection*{Case 3c} $\mathcal{N}_4^3$.  We have $\sigma=-1$. This is the hyperbolic plane and is
affine Killing complete.
\subsection*{Case 4} $\dim\{\mathfrak{K}(\mathcal{N})\}=2$.
${\mathfrak{K}}(\mathcal{N})=\operatorname{Span}\{x^1\partial_{x^1}+x^2\partial_{x^2},\partial_{x^2}\}$
and $\mathcal{N}$ is affine Killing complete.

\subsection*{Acknowledgments} We are grateful for useful comments by D. D'Ascanio and P. Pisani concerning these matters.

\end{document}